
\documentclass[preprint,12pt]{elsarticle}




\usepackage{amssymb}
\usepackage{color}
\usepackage{mathptmx}       
\usepackage{helvet}         
\usepackage{courier}        
\usepackage{type1cm}        
%
\usepackage{makeidx}         
\usepackage{graphicx}        
\usepackage{multicol}        
\usepackage[bottom]{footmisc}
\usepackage{amsmath}
\usepackage{amsfonts}
\usepackage{amsthm}






\newtheorem{theorem}{Theorem}
\newtheorem{corollary}[theorem]{Corollary}
\newtheorem{lemma}[theorem]{Lemma}

\newtheorem{proposition}[theorem]{Proposition}

\numberwithin{equation}{section}

\journal{Stochastic Processes and their Applications}

\begin{document}

\begin{frontmatter}



\title{Conformal restriction: the radial case}


\author{Hao Wu}

\address{Universit\'e Paris-Sud}

\begin{abstract}
We describe all random sets that satisfy the radial conformal restriction property, therefore providing the analogue in the radial case of results of
Lawler, Schramm and Werner in the chordal case.

\end{abstract}

\begin{keyword}
conformal restriction\sep radial SLE  

\MSC 60K35 \sep 60J67


\end{keyword}

\end{frontmatter}

\newcommand{\eps}{\epsilon}
\newcommand{\ov}{\overline}
\newcommand{\U}{\mathbb{U}}
\newcommand{\T}{\mathbb{T}}
\newcommand{\HH}{\mathbb{H}}
\newcommand{\LA}{\mathcal{A}}
\newcommand{\LC}{\mathcal{C}}
\newcommand{\LF}{\mathcal{F}}
\newcommand{\LK}{\mathcal{K}}
\newcommand{\LE}{\mathcal{E}}
\newcommand{\LL}{\mathcal{L}}
\newcommand{\R}{\mathbb{R}}
\newcommand{\C}{\mathbb{C}}
\newcommand{\N}{\mathbb{N}}
\newcommand{\Z}{\mathbb{Z}}
\newcommand{\E}{\mathbb{E}}
\newcommand{\PP}{\mathbb{P}}
\newcommand{\QQ}{\mathbb{Q}}
\newcommand{\MR}{MR}
\newcommand{\giv}{\,|\,}


\section{Introduction}

The present paper is a write-up of the ``radial'' counterpart of some of the results derived in the ``chordal'' setting in the paper
 \cite{LawlerSchrammWernerConformalRestriction} by Lawler, Schramm and Werner. The goal is to describe the laws of all random sets that satisfy
 a certain radial conformal restriction property.

Let us describe without further ado this property, and the main result of the present paper:
Consider the unit disc $\U$ and we fix a boundary point $1$ and an interior point the origin. We will study closed random subsets $K$ of $\overline{\U}$ such that:
\begin{itemize}
\item $K$ is connected, $\C\setminus K$ is connected, $K\cap\partial\U=\{1\}$, $0\in K$ .
\item For any closed subset $A$ of $\overline{\U}$ such that $A=\overline{\U\cap A}$, $\U\setminus A$ is simply connected, contains the origin and has 1 on the boundary, the law of $\Phi_A(K)$ conditioned on $(K\cap A=\emptyset)$ is equal to to law of $K$ where $\Phi_A$ is the conformal map from $\U\setminus A$ onto $\U$ that preserves 1 and the origin (see Figure \ref{fig::radial_conf_restriction}).
\end{itemize}
The law of such a set $K$ is called a radial restriction measure, by analogy with the chordal restriction measures defined in  \cite{LawlerSchrammWernerConformalRestriction}.

\begin{figure}[ht!]
\begin{center}
\includegraphics[width=0.7\textwidth]{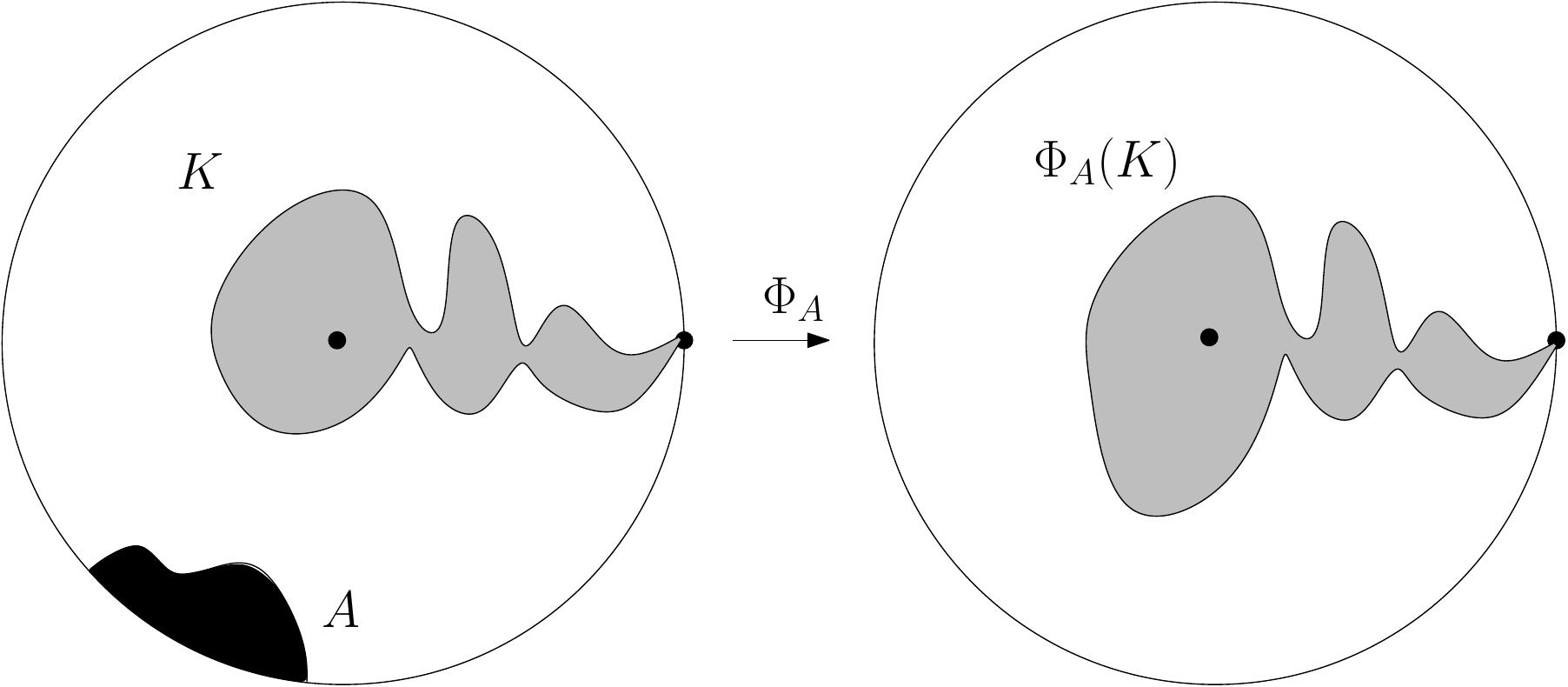}
\end{center}
\caption{\label{fig::radial_conf_restriction}$\Phi_A$ is the conformal map from $\U\setminus A$ onto $\U$ that preserves 0 and 1. Conditioned on $(K\cap A=\emptyset)$, $\Phi_A(K)$ has the same law as $K$.}
\end{figure}

The main result of the present paper is the following classification and description of all radial restriction measures.
\begin{theorem}\label{thm::radial_restriction}
\begin {enumerate}
\item {(Characterization).}
A radial restriction measure is fully characterized by a pair of real numbers $(\alpha,\beta)$ such that
\begin{equation*}\PP\bigl[K\cap A=\emptyset\bigr]=|\Phi_A'(0)|^{\alpha}\Phi_A'(1)^{\beta}\end{equation*}
where $A$ is any closed subset of $\overline{\U}$ such that $A=\overline{\U\cap A}$, $\U\setminus A$ is simply connected, contains the origin and has 1 on the boundary, and $\Phi_A$ is the conformal map from $\U\setminus A$ onto $\U$ that preserves $0$ and $1$. We denote the corresponding radial restriction measure by $\PP\left({\alpha,\beta}\right)$.
\item {(Existence).}
The measure $\PP\left({\alpha,\beta}\right)$ exists if and only if
\[\beta\ge \frac{5}{8},\quad \alpha\le \xi(\beta)=\frac{1}{48}\left((\sqrt{24\beta+1}-1)^2-4\right).\]
\end {enumerate}
\end{theorem}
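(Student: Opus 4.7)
Writing $F(A) = \PP[K\cap A=\emptyset]$, the restriction property applied to a nested pair $A\subset A'$, together with the identity $\Phi_{A'}=\Phi_{\Phi_A(A'\setminus A)}\circ\Phi_A$, yields the multiplicative cocycle
\begin{equation*}
F(A') = F(A)\,F\bigl(\Phi_A(A'\setminus A)\bigr).
\end{equation*}
The quantities $|\Phi_A'(0)|$ and $\Phi_A'(1)$ satisfy the same cocycle, and the plan is to show that $F$ is a pure monomial in these two. I would proceed by considering two one-parameter semigroups of hulls: $A^{(\mathrm{in})}_t$ supported in a small disc about $0$ and $A^{(\mathrm{bd})}_t$ supported in a small arc about $1$, in each case parametrized by the associated radial Loewner evolution. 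On each semigroup the cocycle plus measurability forces $F(A_t)$ to be exponential in $t$, which pins down exponents $\alpha$ and $\beta$ respectively. Approximating a general $A$ by a finite concatenation of such elementary hulls and passing to the limit then gives $F(A)=|\Phi_A'(0)|^{\alpha}\Phi_A'(1)^{\beta}$. The delicate point is that among conformal maps fixing $0$ and $1$, a hull of small conformal modulus is, to first order, parametrized by only the two real numbers $|\Phi_A'(0)|$ and $\Phi_A'(1)$; this is what justifies the reduction to exactly two exponents.

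\textbf{Existence.} To construct $\PP(\alpha,\beta)$ for every admissible pair I would combine a simple-curve building block with Brownian loop soups. The natural candidate for the boundary $\alpha=\xi(\beta)$ of the admissible region is the trace of a radial $\mathrm{SLE}_\kappa(\rho)$ from $1$ targeting $0$, with $\kappa\le 8/3$ and $\rho$ chosen so that a local martingale of the form $|g_t'(0)|^{a}g_t'(1)^{b}\times(\text{geometric factor})$ survives both interior and boundary perturbations; an It\^o computation should then identify $a=\xi(\beta)$ and $b=\beta$. One then adds two conformally invariant Poissonian ingredients: Brownian loops in $\U$ that disconnect $0$ from $\partial\U$ (shifting only $\alpha$) and Brownian loops that touch the simple curve but do not surround $0$ (shifting $\alpha$ and $\beta$ together). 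Because the Brownian loop measure already satisfies a conformal restriction identity with a multiplicative compensator, the filled hull inherits radial restriction with shifted exponents, and varying the two soup intensities sweeps out the entire region $\{\beta\ge 5/8,\ \alpha\le\xi(\beta)\}$. Non-existence outside this region is proved by localization: taking $A$ close to $1$ reduces to the chordal problem and forces $\beta\ge 5/8$ via the Lawler--Schramm--Werner classification, while $\alpha\le\xi(\beta)$ comes from an analysis of the outer boundary of $K$ near $1$, which must look like an $\mathrm{SLE}_\kappa(\rho)$ trace.

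\textbf{The main obstacle.} The hardest step, I expect, is the extremal case $\alpha=\xi(\beta)$: one must identify the correct radial $\mathrm{SLE}_\kappa(\rho)$, compute the Jacobian of its Loewner flow at both $0$ and $1$, and verify that the candidate local martingale is uniformly integrable on each restriction event so that the restriction probability can be read off from its initial value. The explicit form $\xi(\beta)=\frac{1}{48}\bigl((\sqrt{24\beta+1}-1)^2-4\bigr)$ should drop out of the It\^o condition as the quadratic in $a$ that makes the drift vanish, and matching the multiplicative compensator of the added Brownian loops pins down exactly how much each loop family shifts $\alpha$ and $\beta$. Once this piece is in place, the remaining arguments are fairly direct translations of the chordal scheme of Lawler--Schramm--Werner.
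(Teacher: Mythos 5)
Your characterization sketch has a genuine gap exactly where you flag the ``delicate point.'' You observe correctly that for a small hull $A$ anchored at angle $\theta$ with capacity $\eps$, the two numbers $|\Phi_A'(0)|\approx e^{\eps}$ and $\Phi_A'(1)\approx e^{-\eps/(1-\cos\theta)}$ together determine $(\theta,\eps)$; but that just says the derivatives are a good \emph{coordinate system} for infinitesimal hulls, not that $\PP[K\cap A=\emptyset]$ depends on them monomially. A priori, $\PP[K\cap A=\emptyset]=\exp(-\nu(\theta)\,\eps+o(\eps))$ for an arbitrary continuous positive function $\nu$ on $(0,2\pi)$, and the cocycle property alone does nothing to force $\nu(\theta)=-\alpha+\beta/(1-\cos\theta)$. (In the chordal case this step is trivial because scale and translation invariance kill the $\theta$-dependence; in the radial case the loss of scale invariance is precisely the issue.) The paper's entire characterization argument is devoted to this missing step: after transferring to $\HH$ with marked points $0$ and $i$, it proves that $\lambda(x):=\lim_{\eps\to 0}\eps^{-2}\PP[K\cap B(x,\eps)\neq\emptyset]$ is locally Lipschitz and then differentiable, derives a commutation relation
\[
\lambda'(y)F(x,y)+2\lambda(y)G(x,y)=\lambda'(x)F(y,x)+2\lambda(x)G(y,x)
\]
from exchanging the order of two perturbations, and from that a third-order linear ODE in $\lambda$ whose solution space is exactly the two-parameter family $\lambda(x)=(c_0+c_2x^2)/(x^2(1+x^2)^2)$. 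Your semigroup-concatenation plan never produces this ODE, and without it you cannot conclude that a hull anchored at a generic boundary angle yields the same $(\alpha,\beta)$ as the ones near $0$ and $1$.

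On existence you have some of the right ingredients but also a substantial mismatch. Your It\^o-computation sketch for the extremal local martingale agrees in spirit with the paper's unnumbered lemma leading to $M_t=|h_t'(0)|^{\alpha}|h_t'(e^{iW_t})|^{5/8}|h_t'(e^{iV_t})|^{\gamma}Z_t^{3\rho/8}$. However, your construction of $\PP(\xi(\beta),\beta)$ via a single radial $\mathrm{SLE}_\kappa(\rho)$ decorated with two independent Brownian loop families (loops surrounding $0$, and loops touching the curve near $1$) is not what the paper does and is not shown to close up: the paper instead realizes $\PP(\xi(\beta),\beta)$ as the filled region between a radial $\mathrm{SLE}_{8/3}(\rho)$ (right boundary) and, conditionally, a chordal $\mathrm{SLE}_{8/3}^R(\rho-2)$ in its complement (left boundary), with $\rho=\frac23(\sqrt{24\beta+1}-1)-2$, and verifies the restriction identity by the martingale just above. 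Loop soups in the paper serve only to move strictly below the curve $\alpha=\xi(\beta)$ (loops surrounding $0$ multiply $\PP[K\cap A=\emptyset]$ by $|\Phi_A'(0)|^{-c}$ and lower $\alpha$ while fixing $\beta$). For the upper bound $\alpha\le\xi(\beta)$, the argument is not ``the outer boundary must look like an $\mathrm{SLE}_\kappa(\rho)$'': it is that adding any nontrivial loop soup buries $0$ inside $K$, so $0\notin\partial K$, combined with the verification that $0\in\partial K$ almost surely under $\PP(\xi(\beta),\beta)$. Finally, for $\beta\ge 5/8$ your ``take $A$ close to $1$'' is roughly the right idea, but the paper's actual argument applies M\"obius maps sending $0\mapsto -1+\eps$ while fixing $1$, establishes tightness in the Hausdorff topology, and identifies any weak subsequential limit as the chordal restriction measure $\QQ_\beta$, which then forces $\beta\ge 5/8$ by the Lawler--Schramm--Werner classification.
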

We shall give an explicit construction of the measures  $\PP\left({\alpha,\beta}\right)$ for all these admissible values of $\alpha$ and $\beta$. The function
$\xi (\beta)$ is (as could be expected) the so-called disconnection exponent associated with the half-plane exponent $\beta$ (see \cite{LawlerWernerUniversalityExponent,LawlerSchrammWernerExponent1, LawlerSchrammWernerExponent2,LawlerSchrammWernerExponent3}).

It is worth observing that $|\Phi_A'(0)| \ge 1$ and that $\Phi'_A (1)\le 1$. In Theorem \ref {thm::radial_restriction}, we see that the value of $\beta$ is necessarily positive (and that therefore $\Phi'_A (1)^\beta \le 1$), but the value of $\alpha$ can be negative or positive (as long as $\alpha \le \xi (\beta)$), so that
$|\Phi'_A(0)|^\alpha$ can be greater than one (but of course, the product $|\Phi'_A (0)|^\alpha \Phi'_A (1)^\beta $ cannot be greater than one which is guaranteed by the condition $\alpha\le\xi(\beta)$).

This theorem is the counterpart of the classification of chordal restriction measures in  \cite{LawlerSchrammWernerConformalRestriction} that we shall recall in the next section.
It is worth noticing already that while the class of chordal conformal restriction measures was parametrized by a single parameter $\beta \ge 5/8$, the class of radial restriction samples is somewhat larger as it involves the additional parameter $\alpha$. This can be rather easily explained by the fact that the radial restriction property is in a sense weaker than the chordal one. It involves an invariance property of the probability distribution under the action of the semi-group of conformal transformations that preserve both an inner point and a boundary point of the disc. In the chordal case, the semi-group of transformations were those maps that preserve two given boundary points (which is a larger family). Another way to see this is that the chordal restriction samples in the upper half-plane are scale-invariant, while the radial ones aren't. However, and this will be apparent in the latter part of the proof of Theorem \ref{thm::radial_restriction}, chordal restriction samples of parameter $\beta$ can be
viewed as limits of radial ones with parameters $(\alpha, \beta)$ (for all admissible $\alpha$'s), in the same way as chordal SLE can be viewed as the limit of radial SLE when the inner point converges to the boundary of the domain.

\medbreak
These results have been discussed and mentioned before, at least partially, in e-mail exchanges, lectures and discussions by a number of mathematicians, including of course Lawler, Schramm and Werner, and also Dub\'edat or Gruzberg.
In fact, reference 31. in the paper \cite{LawlerSchrammWernerConformalRestriction} written in 2003 by Lawler, Schramm and Werner is precisely a paper ``in preparation'' with the very same title as the present one. I wish to hereby thank Greg Lawler and Wendelin Werner for letting me write up the present paper and work out the details of the proofs.

\section{Preliminaries}
We now briefly recall some background material that will be needed in our proofs, concerning chordal or radial SLE and their SLE$_\kappa (\rho)$ variants, Brownian loop-soups as well as chordal restriction measures.
When $K$ is a subset of $\C$ and $x\in\C$, we denote $x+K$ as the set $\{x+z :  z\in K\}$ and $xK$ as the set $\{xz : z\in K\}$.
\subsection{Chordal Loewner chains and SLE}
Suppose $(W_t,t\ge 0)$ is a real-valued continuous function. For each $z\in\overline{\HH}$, define $g_t(z)$ as the solution to the chordal Loewner ODE:
\[\partial_t g_t(z)=\frac{2}{g_t(z)-W_t},\quad g_0(z)=z.\]
Write $\tau(z)=\sup\{t\ge 0: \inf_{s\in[0,t]}|g_s(z)-W_s|>0\}$ and $K_t=\{z\in\HH: \tau(z)\le t\}$. Then $g_t$ is the unique conformal map from $\HH\setminus K_t$ onto $\HH$ such that $|g_t(z)-z|\to 0$ as $z\to\infty$. And $(g_t,t\ge 0)$ is called the chordal Loewner chain generated by the driving function $(W_t,t\ge 0)$. In fact, we have $(g_t(z)-z)z\to 2t \text{ as } z\to\infty$.
\medbreak
SLE curves are introduced by Oded Schramm as candidates of scaling limits of discrete statistical physics models (see \cite{SchrammFirstSLE}). A chordal SLE$_{\kappa}$ is defined by the random family of chordal conformal maps $g_t$ when $W=\sqrt{\kappa}B$ where $B$ is a standard one-dimensional Brownian motion. It is proved that there exists a.s. a continuous curve $\eta$ such that for each $t\ge 0,$ $\HH\setminus K_t$ is the unbounded connected component of $\HH\setminus \eta([0,t])$ (see \cite{RohdeSchrammSLEBasicProperty}).

\medbreak

Chordal SLE$_{\kappa}(\rho)$ processes are variants of SLE$_{\kappa}$ process. For simplicity, we will here only describe the SLE$_\kappa (\rho)$ processes with just one additional
force point:  It is the measure on the random family of conformal maps $g_t$ generated by chordal Loewner chain with $W_t$ replaced by the solution to the system of SDEs:
\begin{equation*}
\begin{split}
dW_t&=\sqrt{\kappa}dB_t+\frac{\rho}{W_t-V_t}dt;\\
dV_t&=\frac{2}{V_t-W_t}dt, \quad V_0=x \not= 0, \quad (W_t - V_t) / (W_0 - V_0) \ge 0.
\end{split}
\end{equation*}

When $\kappa>0, \rho>-2$, there is a pathwise unique solution to the above SDEs. The force point is repelling when $\rho$ is positive while it is attracting when $\rho$ is negative.
There exists a.s. a continuous curve $\eta$ in $\overline{\HH}$ from 0 to $\infty$ associated to the SLE$_{\kappa}(\rho)$ process (see \cite{MillerSheffieldIG1}).

In the limit when $x \to 0+$ (respectively $0-$), the process has a limit that is scale-invariant in distribution. This enables to define the corresponding SLE$_\kappa (\rho)$ (referred to as SLE$_\kappa^R (\rho)$ or SLE$_\kappa^L (\rho)$ to indicate if the force-point is to the right or to the left of the driving point) from a boundary point of a simply connected domain to another by conformal invariance, just as for ordinary SLE$_\kappa$.

\subsection{Chordal restriction samples}\label{sec::pre_chordal_restriction}
We now recall briefly some facts from \cite{LawlerSchrammWernerConformalRestriction}.
Consider the upper half plane $\HH$ and we fix two boundary points $0$ and $\infty$. A (two-sided) chordal restriction sample is a closed random subset of $\overline{\HH}$ such that
\begin{itemize}
\item $K$ is connected, $\C\setminus K$ is simply connected, $K\cap \R=\{0\}$, and $K$ is unbounded.
\item For any closed subset $A$ of $\overline{\HH}$ such that $A=\overline{\HH\cap A}$, $\HH\setminus A$ is simply connected, $A$ is bounded and $0\not\in A$, the law of $\Psi_A(K)$ conditioned on $(K\cap A=\emptyset)$ is equal to the law of $K$ where $\Psi_A$ is any given conformal map from $\HH\setminus A$ onto $\HH$ that preserves 0 and $\infty$.
\end{itemize}
Note that this second property in the case where $A = \emptyset$ shows that the law of $K$ is scale-invariant (ie. that $K$ and $\lambda K$ have the same distribution for any fixed positive $\lambda$).
It is proved that the chordal restriction measures form a one-parameter family $(\QQ_{\beta})$, such that for all $A$ as before,
\[\QQ_\beta\bigl[K\cap A=\emptyset\bigr]=\Psi_A'(0)^{\beta}\]
where $\Psi_A$ is the conformal map from $\HH\setminus A$ onto $\HH$ that preserves 0 and $\Psi_A(z)/z\to 1$ as $z\to\infty$ (see \cite{LawlerSchrammWernerConformalRestriction}). In that paper, it is proved that the chordal conformal restriction measure $\QQ_{\beta}$ exists if and only if $\beta\ge 5/8$.

We would like to make the following remarks that will be relevant for the present paper:
\begin {enumerate}
\item
Chordal restriction samples can be defined in any simply connected domain $H \not= \C$ by conformal invariance (using the fact that their law in $\HH$ is scale-invariant: $K$ and $\lambda K$ have the same law for any fixed positive constant $\lambda$).
For instance, if  $H$ is such a simply connected domain and  $z,w$ are two different boundary points, the chordal restriction sample in $H$ connecting $z$ and $w$ is the image of chordal restriction sample in $\HH$ under any given conformal map $\phi$ from $\HH$ onto $H$ that sends the pair $(0,\infty)$ to $(z,w)$.
\item
In the proof of the construction of these (two-sided) chordal restriction samples, an important role is played by
the related ``right-sided chordal restriction samples'', that we shall also use at some point in the present paper.
These are a closed random subset $K$ of $\overline{\HH}$ such that
\begin{itemize}
\item $K$ is connected, $\C\setminus K$ is connected, $K\cap\R=(-\infty,0]$.
\item For any closed subset $A$ of $\overline{\HH}$ such that $A=\overline{\HH\cap A}$, $\HH\setminus A$ is simply connected, $A$ is bounded and $A\cap\R\subset (0,\infty)$, the law of $\Psi_A(K)$ conditioned on $(K\cap A=\emptyset)$ is equal to the law of $K$ where $\Psi_A$ is any conformal map from $\HH\setminus A$ onto $\HH$ that preserves 0 and $\infty$.
\end{itemize}
It is clear that the domain to the left of the right boundary of chordal restriction sample is a right-sided restriction sample. Precisely, suppose $K$ is the closure of the union of the domains between $\R_-$ and the right boundary of a (two-sided) chordal restriction sample, then $K$ is a right-sided restriction sample. In fact, there exists a one-parameter family $\QQ^+_{\beta}$ such that
\[\QQ^+_{\beta}\bigl[K\cap A=\emptyset\bigr]=\Psi_A'(0)^{\beta}\]
where $\Psi_A$ is the conformal map from $\HH\setminus A$ onto $\HH$ that preserves 0 and $\Psi_A(z)/z\to 1$ as $z\to\infty$. $\QQ^+_{\beta}$ exists if and only if $\beta\ge 0$. We usually ignore the trivial case $\beta=0$ where $K=\R_-$.

One example of right-sided restriction sample is given by SLE$_{8/3}^L (\rho)$:  Let $\eta$ be such a process in $\overline{\HH}$ from 0 to $\infty$. Let $K$ be the closure of the union of domains between $\eta$ and $\R_-$. Then $K$ is a right-sided restriction sample with exponent $\beta=(\rho+2)(3\rho+10)/32$. Conversely, let $K$ be a right-sided restriction sample with exponent $\beta>0$, then the right boundary of $K$ is an SLE$_{8/3}^L(\rho)$ process with
\begin{equation}\label{eqn::rho_beta}\rho=\rho(\beta)=\frac{2}{3}(\sqrt{24\beta+1}-1)-2.\end{equation}

\item We have just seen the the right boundary  of a two-sided restriction sample is an SLE$_{8/3}^L (\rho)$ process. It is also possible to describe the conditional law of the left boundary given the right boundary:
Denote $L_r$ as the domain between $\R_-$ and the right boundary of $K$. Then, given this right boundary, the conditional law of the left boundary of $K$ is an SLE$_{8/3}^R(\rho-2)$ from  $0$ to $\infty$ in $L_r$ (see \cite{WernerGirsanov}). In fact, we shall \textit{construct} our radial restriction samples using the radial analogue of this recipe.

\item
Let $C(K)$ be the cut point set of $K$ i.e. the set of points $x$ in $K$ such that  $K \setminus \{ x \}$ is not connected.
Note that $C(K)$ is the intersection of the right and left boundaries of $K$. It turns out that the right and left boundaries of $K$ can be coupled with a Gaussian Free Field as two flow lines, which enables to prove (see \cite[Theorem 1.5]{MillerWuSLEIntersection}) that the Hausdorff dimension of $C(K)$ is almost surely
equal to $(25-u^2)/12$ where $u=\sqrt{24\beta+1}-1$,
when $5/8\le \beta\le 35/24$, whereas $C(K)=\emptyset$ almost surely when $\beta>35/24$.

\item
It is possible to describe the half-plane Brownian non-intersection exponents $\tilde{\xi}$ in terms of restriction measures.
For instance, consider two independent chordal restriction samples $K_1$ and $K_2$ with exponent $\beta_1,\beta_2$ respectively. One can derive that, conditioned on $(K_1\cap K_2=\emptyset)$ (viewed as the limit of $K_1\cap(x+K_2)\cap B(0,R)=\emptyset$ as $x\to 0,R\to\infty$), the ``inside'' of $K_1\cup K_2$ has the same law as a chordal restriction sample of exponent $\tilde \xi (\beta_1, \beta_2)$.

\item
It is possible to use restriction samples in order to describe the law of SLE$_{\kappa}(\rho)$ processes as SLE$_{\kappa}$ processes conditioned not to intersect a chordal restriction sample. For details, see \cite[Equations (9),(10)]{WernerGirsanov}.
\end {enumerate}

\subsection{Brownian loop soup}
We now briefly recall some results from \cite{LawlerWernerBrownianLoopsoup}.
It is well known that Brownian motion in $\C$ is conformally invariant. Let us now define for all $t \ge 0$, the law $\mu_t (z,z)$ of the two-dimensional Brownian bridge of time-length
$t$ that starts and ends at $t$ and define
\[\mu^{loop}=\int_{\C} \int_0^\infty  dz \frac {dt}{t} \mu_t (z,z)\]
where $dz$ is the Lebesgue measure in $\C$ that we view as a measure on \textit{unrooted} loops modulo time-reparametrization  (see \cite{LawlerWernerBrownianLoopsoup}). Then, $\mu^{loop}$ inherits a striking conformal invariance property.
More precisely, if for any subset $D\subset\C$, one defines the Brownian loop measure $\mu^{loop}_D$ in $D$ as the restriction of $\mu^{loop}$ to the set of loops contained in $D$, then it is shown in \cite{LawlerWernerBrownianLoopsoup}:
\begin{itemize}
\item For two domains $D'\subset D$, $\mu^{loop}_{D}$ restricted to the loops contained in $D'$ is the same as $\mu^{loop}_{D'}$ (this is a trivial consequence of the definition of these measures).
\item For two simply connected domains $D_1,D_2$, let $\Phi$ be a conformal map from $D_1$ onto $D_2$, then the image of $\mu^{loop}_{D_1}$ under $\Phi$ has the same law as $\mu^{loop}_{D_2}$ (this non-trivial fact is inherited from the conformal invariance of planar Brownian motion).
\end{itemize}

From these two properties, if we denote $\mu_{\U}^0$ as $\mu^{loop}_{\U}$ restricted to the loops surrounding the origin, then it is further noted in \cite{WernerSelfavoidingLoop} that
\begin{equation}\label{eqn::brownian_loop_exit}
\mu^0_{\U}(\gamma\not\subset U)=\log \Phi'(0)
\end{equation}
where $U$ is any simply connected subset of $\U$ that contains the origin and $\Phi$ is the conformal map from $U$ onto $\U$ that preserves the origin and $\Phi'(0)>0$.

For $c>0$, let $(\gamma_j,j\in J)$ be a Poisson point process with intensity $c\mu_{\U}^0$, then, from Equation \eqref{eqn::brownian_loop_exit}, we have that
\[\PP\bigl[\gamma_j\subset U,\forall j\in J\bigr]=\exp\left(-c\mu_{\U}^0(\gamma\not\subset U)\right)=\Phi'(0)^{-c}\]
where $U$ is any simply connected subset of $\U$ that contains the origin and $\Phi$ is the conformal map from $U$ onto $\U$ that preserves the origin and $\Phi'(0)>0$.

\subsection{Radial Loewner chains and SLE}
Suppose $(W_t,t\ge 0)$ is a real-valued continuous function. For each $z\in\overline{\U}$, define $g_t(z)$ as the solution to the radial Loewner ODE:
\[\partial_t g_t(z)=g_t(z)\frac{e^{iW_t}+g_t(z)}{e^{iW_t}-g_t(z)},\quad g_0(z)=z.\]
Write $\tau(z)=\sup\{t\ge 0: \inf_{s\in[0,t]}|g_s(z)-e^{iW_s}|>0\}$ and $K_t=\{z\in\overline{\U}: \tau(z)\le t\}$. Then $g_t$ is the unique conformal map from $\U\setminus K_t$ onto $\U$ such that $g_t(0)=0,g_t'(0)>0$. And $(g_t,t\ge 0)$ is called the radial Loewner chain generated by the driving function $(W_t,t\ge 0)$. In fact, we have $g'_t(0)=e^t$.
\medbreak
Before introducing the radial SLE, let us first define some special Loewner chains that will be of use later on.
We want to define a radial Loewner curve $\eta$ such that, for any $t>0$, the future part of the curve $\eta([t,\infty))$ under $g_t$ is exactly $\eta$ up to a rotation of the disc.
Precisely, fix $\theta\in (0,2\pi)$, define the driving function $W^{\theta}_t=\theta-t\cot\frac{\theta}{2}$. Let $(g_t,t\ge 0)$ be the radial Loewner chain generated by $W^{\theta}$. And define $f_t(\cdot)=g_t(\cdot)/g_t(1)$. Then there exists a continuous curve $\eta^{\theta}$ started from $e^{i\theta}$ and ended at the origin such that $g_t$ is the conformal map from $\U\setminus \eta^{\theta}([0,t])$ and $g_t(0)=0,g_t'(0)=e^t$. From the radial Loewner ODE, we have that $g_t(1)=e^{i(W_t-\theta)}$, and $f_t(\eta^{\theta}(t))=e^{i\theta}$. Further, for any $t,s>0$, $f_t(\eta^{\theta}([t,t+s]))=\eta^{\theta}([0,s])$. We call $\eta^{\theta}$ as \textit{perfect radial curve} started from $e^{i\theta}$. Note that
\begin{equation}\label{eqn::perfect_two_derivatives}
|f'_t(0)|=e^t,\quad f'_t(1)=\exp(-\frac{t}{1-\cos\theta}).
\end{equation}

\medbreak
A radial SLE$_{\kappa}$ is defined by the random family of radial conformal maps $g_t$ when $W=\sqrt{\kappa}B$ where $B$ is a standard one-dimensional Brownian motion. It is proved that there exists a.s. a continuous curve $\eta$ such that for each $t\ge 0,$ $\U\setminus K_t$ is the connected component of $\U\setminus \eta([0,t])$ containing the origin (this is due to the absolute continuity relation between radial and chordal SLEs and the corresponding results for chordal SLEs).

Let us briefly focus on radial SLE$_{8/3}$. Let $\eta$ be an SLE$_{8/3}$ in $\U$ from 1 to the origin. It is known (see \cite[Section 6.5]{LawlerConformallyInvariantProcesses}) that
\begin{equation}\label{eqn::radial_sle_8/3}
\PP\bigl[\eta\cap A=\emptyset\bigr]=|\Phi_A'(0)|^{{5}/{48}}\Phi_A'(1)^{{5}/{8}}
\end{equation}
where $A$ is any closed subset of $\overline{\U}$ such that $A=\overline{\U\cap A}$, $\U\setminus A$ is simply connected, contains the origin and has 1 on the boundary; $\Phi_A$ is the conformal map from $\U\setminus A$ onto $\U$ that preserves the origin and the boundary point $1$. This result follows from a standard martingale computation for radial SLE$_{8/3}$. This will ensure that the measure that we will call $\PP (5/48, 5/8)$ does exist.

\medbreak
We will also make use of a radial version of SLE$_\kappa (\rho)$ processes. For simplicity, let us just define the radial SLE$_{\kappa}(\rho)$ process with only one force point.
It is the measure on the random family of conformal maps $g_t$ generated by radial Loewner chain with $W_t$ replaced by the solution to the system of SDEs:
\begin{equation}\label{eqn::radial_loewner_sde}
\begin{split}
dW_t&=\sqrt{\kappa}dB_t+\frac{\rho}{2}\cot(\frac{W_t-V_t}{2}) dt;\\
dV_t&=-\cot(\frac{W_t-V_t}{2}) dt, \quad V_0=x\in (0,2\pi).
\end{split}
\end{equation}
When $\kappa>0, \rho>-2$, there is a pathwise unique solution to the above SDEs. And there exists a.s. a continuous curve $\eta$ in $\overline{\U}$ from 1 to 0 associated to the radial SLE$_{\kappa}(\rho)$ process \cite{SchrammWilsonSLECoordinatechanges,ZhanReversalRadialSLE, MillerSheffieldIG4}. Note that, in the radial case, a right force point $e^{ix}$ with $x\in (0,2\pi)$ can also be viewed as a left force point $e^{i(2\pi-x)}$. Thus, in constrast with the chordal case, we do not use the terminology of ``left" and ``right" force point for the radial case. Let $x\to 0+$ (resp. $x\to 2\pi-$), the process has a limit and we call this limit process as radial SLE$_{\kappa}(\rho)$ in $\overline{\U}$ from 1 to 0 with force point $1^+$ (resp. $1^-$). It is worthwhile to point out that the perfect curve started from $e^{i\theta}$ can also be viewed as radial SLE$_0(-2)$ process with $W_0=\theta, V_0=0$.

\section{Characterization}
The present section will be devoted to the proof of the characterization part of our main theorem.

Let $\LA^r$ be the set of all closed $A\subset \overline{\U}$ such that $A=\overline{A\cap\U}$, $\U\setminus A$ is simply connected, contains the origin and has 1 on the boundary. For any $A\in\LA^r$, define $\Phi_A$ as the conformal map from $\U\setminus A$ onto $\U$ such that preserves 1 and the origin. We usually call $\log|\Phi_A'(0)|$ as the capacity of $A$ in $\U$ seen from the origin. Generally, for any domain $U\subset\C$, a closed subset $A\subset\overline{U}$, and a point $z\in\U\setminus A$, the capacity of $A$ in $U$ seen from $z$ is $\log \Phi'(z)$ where $\Phi$ is the conformal map from the connected component of $U\setminus A$ that contains $z$ onto $\U$ and is normalized at $z:$ $\Phi(z)=0,\Phi'(z)>0$.

Let $\Omega$ be the collection of closed subsets $K$ of $\overline{\U}$ such that $K$ is connected, $\C\setminus K$ is connected and $1\in K$, $0\in K$. Endow $\Omega$ with the $\sigma$-field generated by the family of events of the type  $\{ K\in\Omega \  : \  K\cap A=\emptyset\}$ where $A\in\LA^r$ (note that this $\sigma$-field coincides with the $\sigma$-field generated by Hausdorff metric on $\Omega$, this is similar to the chordal case). It is clear that this family of events is closed under finite intersection, so that, just as in the chordal case, we know that:
\begin{lemma}\label{lem::unicity_law}
If $\PP$ and $\PP'$ are two probability measures on $\Omega$ such that $\PP\bigl[K\cap A=\emptyset\bigr]=\PP'\bigl[K\cap A=\emptyset\bigr]$ for all $A\in\LA^r$, then $\PP=\PP'$.
\end{lemma}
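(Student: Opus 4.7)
The argument is a routine application of Dynkin's $\pi$-$\lambda$ theorem. Let
\[\mathcal{C}=\bigl\{\{K\in\Omega:K\cap A=\emptyset\} : A\in\LA^r\bigr\}.\]
By the very definition of the $\sigma$-field on $\Omega$ recalled just above the lemma, $\mathcal{C}$ generates that $\sigma$-field, and by hypothesis $\PP$ and $\PP'$ coincide on $\mathcal{C}$. So the plan is simply: (i) verify that $\mathcal{C}$ is closed under finite intersections (is a $\pi$-system); (ii) invoke Dynkin's theorem to conclude $\PP=\PP'$ on the entire $\sigma$-field.

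The whole content lies in step (i). For $A_1,A_2\in\LA^r$, one has
\[\{K\cap A_1=\emptyset\}\cap\{K\cap A_2=\emptyset\}=\{K\cap(A_1\cup A_2)=\emptyset\},\]
but $A_1\cup A_2$ need not itself lie in $\LA^r$: the set $\U\setminus(A_1\cup A_2)$ can fail to be simply connected, and the boundary point $1$ can fail to be on it. The remedy is to replace $A_1\cup A_2$ by its \emph{radial filling} $\tilde A$, defined as the complement in $\ov{\U}$ of the connected component of $\U\setminus(A_1\cup A_2)$ containing $0$. Since every $K\in\Omega$ is connected and contains both $0$ and $1$, the event $\{K\cap(A_1\cup A_2)=\emptyset\}$ coincides with $\{K\cap\tilde A=\emptyset\}$. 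Two cases arise: if $1$ lies on the boundary of the chosen component, then $\tilde A\in\LA^r$ and the intersection belongs to $\mathcal{C}$; otherwise $A_1\cup A_2$ separates $0$ from $1$ in $\U$, no admissible $K$ can avoid it, the intersection is empty, and the two probabilities trivially agree.

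With (i) in place, (ii) is automatic. The mild obstacle is purely notational, the passage from $A_1\cup A_2$ to $\tilde A$; it is the exact analogue of the corresponding step in the chordal uniqueness lemma of \cite{LawlerSchrammWernerConformalRestriction}, and there is no genuine analytical difficulty.
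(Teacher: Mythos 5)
Your proof is correct and follows the same route the paper intends: a Dynkin $\pi$--$\lambda$ argument, with the only real content being that the collection of events $\{K\cap A=\emptyset\}$, $A\in\LA^r$, is a $\pi$-system. The paper dismisses this as ``clear''; your ``radial filling'' of $A_1\cup A_2$ and the two-case analysis (either the filled set is again in $\LA^r$, or it separates $0$ from $1$ and the intersection event is empty) supplies exactly the detail that the paper leaves implicit.
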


Note that we endow $\LA^r$ with Hausdorff metric, and recall that $K\cap\partial\U=\{1\}$, thus function $A\mapsto \PP[K\cap A=\emptyset]$ is continuous on $\LA^r$. We will implicitly use this fact later in the paper.

It will be useful to use our perfect radial curves. The following fact is the analogue of the fact derived through \cite[Equation (3.1)]{LawlerSchrammWernerConformalRestriction}:
\begin{lemma}\label{lem::definition_nu}
Fix $\theta\in(0,2\pi)$ and let $\eta^{\theta}$ be the perfect radial curve started from $e^{i\theta}$. Let $K$ be a radial restriction sample, then there exists $\nu(\theta)\in(0,\infty)$ such that, for all $t\ge 0$,
\[\PP\bigl[K\cap \eta^{\theta}([0,t])=\emptyset\bigr]=\exp(-\nu(\theta)t).\]
\end{lemma}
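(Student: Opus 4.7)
The plan is to set $p(t):=\PP[K\cap\eta^\theta([0,t])=\emptyset]$ and show that $p(t)=\exp(-\nu(\theta)t)$ with $\nu(\theta)\in(0,\infty)$, by establishing a multiplicative functional equation, continuity, and appropriate strict bounds on $\nu(\theta)$.

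The core of the proof is the identity $p(t+s)=p(t)p(s)$. I decompose the event $\{K\cap\eta^\theta([0,t+s])=\emptyset\}$ as the intersection of $\{K\cap\eta^\theta([0,t])=\emptyset\}$ and $\{K\cap\eta^\theta([t,t+s])=\emptyset\}$, and condition on the former. The conformal map $f_t$ is exactly $\Phi_A$ for $A=\eta^\theta([0,t])\in\LA^r$ (both preserve $0$ and $1$), so by the radial restriction property, conditionally on the first event, $f_t(K)$ has the law of $K$. The defining self-similarity of the perfect curve, namely $f_t(\eta^\theta([t,t+s]))=\eta^\theta([0,s])$, recasts the second event as $\{f_t(K)\cap\eta^\theta([0,s])=\emptyset\}$, whose conditional probability is therefore $p(s)$. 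Multiplying gives $p(t+s)=p(t)p(s)$.

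For continuity, $t\mapsto\eta^\theta([0,t])$ is continuous in Hausdorff metric, and $A\mapsto\PP[K\cap A=\emptyset]$ is continuous on $\LA^r$ by the remark following Lemma \ref{lem::unicity_law}. Combined with $p(0)=1$, the multiplicative identity forces $p(t)=\exp(-\nu(\theta)t)$ for some $\nu(\theta)\in[0,\infty]$.

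To show $\nu(\theta)<\infty$: since $K\cap\partial\U=\{1\}$ and $e^{i\theta}\neq 1$, we have $e^{i\theta}\notin K$ almost surely, hence $\mathrm{dist}(e^{i\theta},K)>0$ almost surely. As $t\to 0$, $\eta^\theta([0,t])$ shrinks to $\{e^{i\theta}\}$ in Hausdorff metric, so $p(t)\to 1$ and $\nu(\theta)<\infty$. The more delicate point is $\nu(\theta)>0$. Suppose $p\equiv 1$; then the restriction property, applied with a probability-one conditioning event, gives $f_t(K)\stackrel{d}{=}K$ unconditionally for every $t>0$. This yields a one-parameter family of conformal symmetries of the law of $K$ under the maps $f_t$, which strongly ``zoom in'' near the origin (recall $|f_t'(0)|=e^t\to\infty$). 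One then compares with an admissible $A\in\LA^r$ on which $K$ lands with positive probability in a way that is incompatible with this invariance, using that $K$ is a nontrivial connected set of diameter at least one (it contains $0$ and $1$). Pinning down this contradiction from bare hypotheses is, in my view, the main obstacle; the self-similarity / multiplicativity step is by contrast the substance of the lemma.
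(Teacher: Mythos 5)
Your multiplicativity and continuity arguments agree with the paper's in substance and in detail: you identify $f_t$ with $\Phi_A$ for $A=\eta^\theta([0,t])$, use the self-similarity $f_t(\eta^\theta([t,t+s]))=\eta^\theta([0,s])$, and conclude $p(t+s)=p(t)p(s)$, so $p(t)=\exp(-\nu(\theta)t)$ for some $\nu(\theta)\in[0,\infty]$. Your exclusion of $\nu(\theta)=\infty$ is also the paper's argument: since $K\cap\partial\U=\{1\}$ and $\theta\neq 0$, the point $e^{i\theta}$ is a.s.\ at positive distance from the closed set $K$, and $\eta^\theta([0,t])$ shrinks to $\{e^{i\theta}\}$, so $p(t)>0$ for small $t$.

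The gap is exactly where you flag it: you do not prove $\nu(\theta)>0$, and the route you sketch (extracting a contradiction from the unconditional distributional invariance $f_t(K)\stackrel{d}{=}K$ as $t\to\infty$) is considerably harder than needed. The paper's argument for $\nu(\theta)>0$ is the symmetric companion of its argument for $\nu(\theta)<\infty$, using the \emph{other} marked point: $0\in K$ by hypothesis, while the perfect curve $\eta^\theta$ terminates at the origin, i.e.\ $\eta^\theta(t)\to 0$ as $t\to\infty$. If $\nu(\theta)=0$, then $K$ would avoid $\eta^\theta([0,t])$ a.s.\ for every $t$, hence avoid the entire curve $\eta^\theta([0,\infty])$ including its endpoint $0$; but $0\in K$, which is a contradiction. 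In short: use $K\cap\partial\U=\{1\}$ with the start point $e^{i\theta}$ to rule out $\nu=\infty$, and use $0\in K$ with the end point $0$ of $\eta^\theta$ to rule out $\nu=0$. You employed the first of these and missed the second.
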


\begin{figure}[ht!]
\begin{center}
\includegraphics[width=0.7\textwidth]{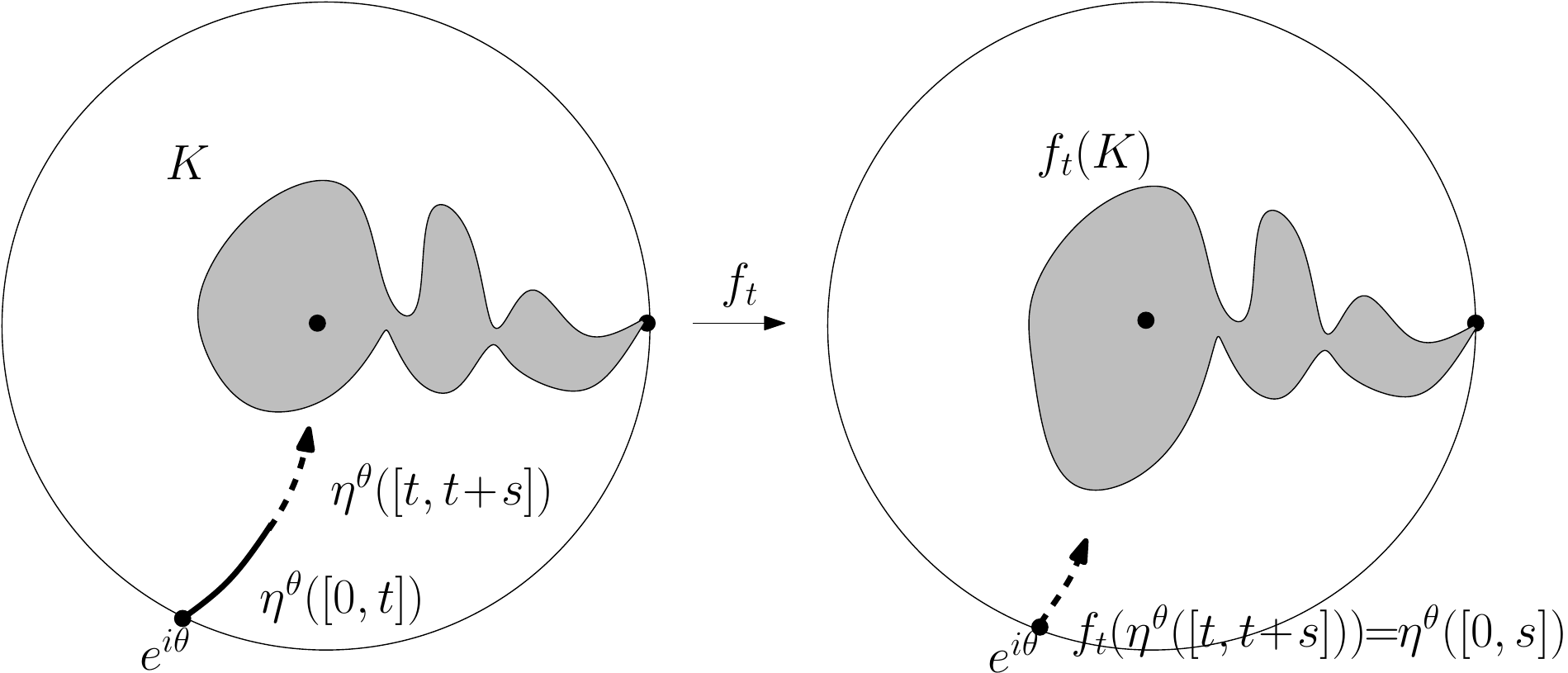}
\end{center}
\caption{\label{fig::perfect_radial_curve} Conditioned on $(K\cap \eta^{\theta}([0,t])=\emptyset)$, $f_t(K)$ has the same law as $K$.}
\end{figure}

\begin{proof} (See Figure \ref{fig::perfect_radial_curve})
Recall that $f_t$ is the conformal map from $\U\setminus \eta^{\theta}([0,t])$ onto $\U$ such that $f_t(0)=0,|f'_t(0)|=e^t, f_t(\eta^{\theta}(t))=e^{i\theta}$ and we also have that $f_t(\eta^{\theta}([t,t+s]))=\eta^{\theta}([0,s])$ for any $t,s>0$. Then, for any $t,s>0$, by the property of radial restriction sample, we have that
\begin{eqnarray*}
\lefteqn{\PP\bigl[K\cap \eta^{\theta}([0,t+s])=\emptyset\giv K\cap \eta^{\theta}([0,t])=\emptyset\bigr]}\\
&=&\PP\bigl[K\cap f_t(\eta^{\theta}([t,t+s]))=\emptyset\bigr]
=\PP\bigl[K\cap \eta^{\theta}([0,s])=\emptyset\bigr].
\end{eqnarray*}
Thus, for any $t,s>0$, we have
\[\PP\bigl[K\cap \eta^{\theta}([0,t+s])=\emptyset\bigr]=\PP\bigl[K\cap \eta^{\theta}([0,t])=\emptyset\bigr]\times \PP\bigl[K\cap \eta^{\theta}([0,s])=\emptyset\bigr].\]
Together with the fact that the function $t\mapsto \PP[K\cap \eta^{\theta}([0,t])=\emptyset]$ is continuous, we have that
\[\PP\bigl[K\cap \eta^{\theta}([0,t])\bigr]=\exp(-\nu(\theta)t)\] for some $\nu(\theta)\in[0,\infty]$. If $\nu(\theta)=\infty$, then $K\cap\eta^{\theta}([0,t])\neq\emptyset$ a.s., for all $t>0$. However $\cap_{t>0}\eta^{\theta}([0,t])=\{e^{i\theta}\}$ and $e^{i\theta}\not\in K$. This rules out the possibility of $\nu(\theta)=\infty$. If $\nu(\theta)=0$, then $K\cap \eta^{\theta}([0,\infty])=\emptyset$ a.s.. This is also impossible since $0\in K$ and $\eta^{\theta}$ ends at the origin.
\end{proof}

We would like to note at this point that in the chordal case, the analogous quantity was obviously constant because of scale-invariance of the chordal restriction measures in the upper half-plane. In the present radial case, this is not going to be the case. In particular, care will be needed to show that $\theta \mapsto \nu (\theta)$ is continuously differentiable.

\medbreak

We are now ready to prove the first part of Theorem \ref{thm::radial_restriction} that we now state as a Proposition:
\begin{proposition}\label{prop::radial_restriction_characterization}
For any radial restriction sample $K$, there exist $\alpha,\beta\in\R$ such that
\[\PP\bigl[K\cap A=\emptyset\bigr]=|\Phi_A'(0)|^{\alpha}\Phi_A'(1)^{\beta} \quad \text{ for all }\quad A\in\LA^r.\]
\end{proposition}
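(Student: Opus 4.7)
The plan is to exploit the multiplicative/semigroup structure furnished by the restriction property. Define the non-negative functions
\[F(A) = -\log\PP\bigl[K\cap A=\emptyset\bigr],\qquad u(A) = \log|\Phi_A'(0)|,\qquad v(A) = -\log\Phi_A'(1)\]
on $\LA^r$. Since $\Phi_{A_1\cup\Phi_{A_1}^{-1}(A_2)} = \Phi_{A_2}\circ\Phi_{A_1}$, the chain rule combined with the radial restriction property makes all three of $F$, $u$, $v$ additive under the semigroup composition $(A_1,A_2)\mapsto A_1\cup\Phi_{A_1}^{-1}(A_2)$. The proposition thus reduces to showing that $F$ belongs to the two-dimensional space $\R u+\R v$ of additive functions on $\LA^r$.

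Candidate values of $\alpha$ and $\beta$ come from the perfect radial curves. By Lemma \ref{lem::definition_nu} and (\ref{eqn::perfect_two_derivatives}), the hull $A=\eta^{\theta}([0,t])$ satisfies $F(A)=\nu(\theta)t$, $u(A)=t$, and $v(A)=t/(1-\cos\theta)$. Fixing two distinct angles $\theta_1,\theta_2\in(0,2\pi)$ and solving the linear system
\[\nu(\theta_i)=-\alpha+\frac{\beta}{1-\cos\theta_i},\qquad i=1,2,\]
determines $(\alpha,\beta)$ uniquely. What remains is to show that the identity $\nu(\theta)=-\alpha+\beta/(1-\cos\theta)$ holds at \emph{every} $\theta$, and then to propagate $F=-\alpha u+\beta v$ from perfect curves to all of $\LA^r$.

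The main tool for this propagation is the radial Loewner chain. Any $A\in\LA^r$ is a Hausdorff limit of hulls $A_T$ generated by continuous driving functions $W_t$. Setting $g_t(1)=e^{i\Theta_t}$ and $\Psi_t=W_t-\Theta_t$, a direct computation on the radial Loewner ODE gives $\partial_t u(A_t)=1$ and $\partial_t v(A_t)=1/(1-\cos\Psi_t)$. The semigroup identity applied infinitesimally reads $F(A_{t+dt})-F(A_t)=F(\Phi_{A_t}(A_{t+dt}\setminus A_t))$; since the small increment $\Phi_{A_t}(A_{t+dt}\setminus A_t)$ agrees to leading order in $dt$ with the perfect radial curve of angle $\Psi_t$ and capacity $dt$, continuity of $\nu$ (once established) converts this into $\partial_t F(A_t)=\nu(\Psi_t)$, hence
\[F(A_T)=\int_0^T\nu(\Psi_s)\,ds.\]
Imposing $F=-\alpha u+\beta v$ on a family of chains whose $\Psi_t$ realizes essentially arbitrary continuous paths in $(0,2\pi)$ forces $\nu$ pointwise to take the form $-\alpha+\beta/(1-\cos\psi)$, after which the formula extends from Loewner-generated hulls to all $A\in\LA^r$ by Hausdorff continuity of $A\mapsto\PP[K\cap A=\emptyset]$.

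The principal obstacle is the technical ingredient flagged in the remark following Lemma \ref{lem::definition_nu}: one must show that $\theta\mapsto\nu(\theta)$ is continuously differentiable. In the chordal case, scale invariance forces the analogue of $\nu$ to be constant and sidesteps the issue entirely; here the absence of any scale symmetry demands a genuine regularity argument. My strategy is first to deduce continuity of $\nu$ from continuity of $A\mapsto\PP[K\cap A=\emptyset]$ in the Hausdorff topology combined with the smooth dependence of $\eta^{\theta}$ on $\theta$, and then to upgrade this to $C^1$ by applying the additivity of $F$ to pairs of perfect curves of nearby angles and controlling the error terms in the infinitesimal perfect-curve approximation uniformly on compact subsets of $(0,2\pi)$. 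Once this regularity is in hand, the Loewner-integral argument above goes through without further difficulty.
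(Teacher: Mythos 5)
Your argument has a circularity at its crux. You set up the Loewner-chain decomposition
\[
F(A_T)=\int_0^T\nu(\Psi_s)\,ds,\qquad u(A_T)=T,\qquad v(A_T)=\int_0^T\frac{ds}{1-\cos\Psi_s},
\]
which is correct and is the additivity you identified. But at the decisive step you write: ``Imposing $F=-\alpha u+\beta v$ on a family of chains whose $\Psi_t$ realizes essentially arbitrary continuous paths forces $\nu$ pointwise to take the form $-\alpha+\beta/(1-\cos\psi)$.'' That is the conclusion, not a hypothesis you are entitled to impose. The additivity of $F$ under the Loewner semigroup is automatic from the restriction property and holds for \emph{any} continuous intensity $\nu$; choosing $(\alpha,\beta)$ so that $\nu$ agrees with $-\alpha+\beta/(1-\cos\theta)$ at two angles $\theta_1,\theta_2$ does nothing to constrain it at other angles. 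In other words, nothing in your argument rules out a radial restriction measure with $\nu(\theta)=-\alpha+\beta/(1-\cos\theta)+\varepsilon(\theta)$ for some nonzero perturbation $\varepsilon$ vanishing at $\theta_1,\theta_2$; you need a genuine additional equation satisfied by $\nu$, and additivity of $F$ does not supply one.

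The paper closes exactly this gap with a two-point ``commutation relation'' (Lemma \ref{lem::lambda_differentiability}), which is a fundamentally different kind of input: one expands the probability $\PP[K\cap B(x,\eps)\neq\emptyset,\ K\cap B(y,\delta)\neq\emptyset]$ in two ways, removing $B(x,\eps)$ first or $B(y,\delta)$ first, and equates the two iterated limits (Lemma \ref{lem::lambda_exchangeorder_limits} makes the interchange rigorous). This produces the functional equation \eqref{eqn::commutation_relation} for $\lambda$, from which a third-order ODE \eqref{eqn::lambda_ode} and then the explicit closed form of Lemma \ref{lem::lambda_expression} follow; the left-right symmetry in $x$ then reduces the three integration constants to two, giving the pair $(\alpha,\beta)$. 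Your outline contains no analogue of this bilinear constraint, and the proposed route to $C^1$ regularity (``applying additivity of $F$ to pairs of perfect curves of nearby angles and controlling the error terms'') is precisely the hard part—in the paper it takes Lemmas \ref{lem::lambda_uniformconvergence}--\ref{lem::lambda_derivative_convergence} and the careful covering estimates in Lemma \ref{lem::lambda_exchangeorder_limits}, all of which pivot on the small-ball asymptotics $p_\eps(x)\asymp\eps^2$ rather than on perfect-curve comparisons alone.
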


Note that Lemma \ref{lem::unicity_law} conversely shows that for any $\alpha$ and $\beta$, there exists at most one law (for $K$) that satisfies this property. When it exists, we call it $\PP (\alpha, \beta)$. An example is provided by radial SLE$_{8/3}$ (see Equation \eqref{eqn::radial_sle_8/3}) that corresponds to $\PP(5/48, 5/8)$.

\medbreak

The first part of the proof of the proposition will be devoted to show that $\theta \mapsto \nu (\theta)$ is a continuously differentiable function. Once this will have been established, it will be possible to use ``commutation relation ideas'' inspired by the formal calculations in \cite{LawlerSchrammWernerConformalRestriction} and by Dub\'edat's paper \cite{DubedatCommutationSLE}.

In order to prove this proposition, it will in fact be a little easier to work in the upper half plane instead of the unit disc. Consider
the conformal map $\varphi_0(z)=i(1-z)/(1+z)$ which maps $\U$ onto $\HH$ and sends $1$ to $0$, $0$ to $i$. A radial restriction sample in $\HH$ (with specified points $0$ and $i$) is just the image of radial restriction sample in $\U$ under the conformal map $\varphi_0$. For $x\in \C,r>0$, We denote $B(x,r)$ as the disc centered at $x$ with radius $r$.

Fix $x\in\R\setminus\{0\},$ let $0<\eps<|x|$. Then
\[g_{x,\eps}(z):=z+\frac{\eps^2}{z-x}\]
is a conformal map from $\HH\setminus B(x,\eps)$ onto $\HH$. Define
\[f_{x,\eps}(z)= b\frac{g_{x,\eps}(z)-c}{b^2+(c-a)(g_{x,\eps}(z)-a)}\]
where $a=\Re(g_{x,\eps}(i)), b=\Im(g_{x,\eps}(i)), c=g_{x,\eps}(0)$. Then $f_{x,\eps}$ is the conformal map from $\HH\setminus B(x,\eps)$ onto $\HH$ that preserves 0 and $i$.

We use the notation $f\lesssim g$ to express that $f/g$ is bounded by universal constant, $f\gtrsim g$ to express $g\lesssim f$, and $f\asymp g$ to express $f\lesssim g$ and $f\gtrsim g$.

\begin{lemma}\label{lem::lambda_definition}
Let $K$ be a radial restriction sample in $\HH$. For any $x\in\R\setminus\{0\}$, the following limit exists
\[\lim_{\eps\to 0}\frac{1}{\eps^2}\PP\bigl[K\cap B(x,\eps)\neq \emptyset\bigr].\]
We denote the limit as $\lambda(x)$, we have further that $\lambda(x)\in (0,\infty)$.
\end{lemma}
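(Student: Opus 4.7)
My plan for Lemma \ref{lem::lambda_definition} is to prove the matching bound $\PP[K\cap B(x,\eps)\neq\emptyset]\asymp\eps^{2}$ and then extract existence of the limit from the restriction property.

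For the lower bound I would transport $B(x,\eps)$ to the disc via $\varphi_0^{-1}$: it corresponds to a half-disc around $e^{i\theta}\in\partial\U$ with $\theta$ determined by $x=\tan(\theta/2)$. The standard Loewner estimate $\mathrm{diam}(\eta^{\theta}([0,t]))\asymp\sqrt{t}$ for small $t$ lets me choose $t\asymp\eps^{2}$ so that the perfect radial curve segment $\eta^{\theta}([0,t])$ lies inside (the preimage of) this half-disc. Lemma \ref{lem::definition_nu} then yields
\[\PP\bigl[K\cap B(x,\eps)\neq\emptyset\bigr]\ge 1-e^{-\nu(\theta)t}\gtrsim\nu(\theta)\,\eps^{2}>0.\]

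For the upper bound I would first derive the explicit expansions
\[f'_{x,\eps}(0)=1-\frac{\eps^{2}}{x^{2}(1+x^{2})}+O(\eps^{4}),\qquad |f'_{x,\eps}(i)|=1+\frac{2\eps^{2}}{(1+x^{2})^{2}}+O(\eps^{4}),\]
directly from $g_{x,\eps}(z)=z+\eps^{2}/(z-x)$ and the Möbius post-composition defining $f_{x,\eps}$. The radial SLE$_{8/3}$ example \eqref{eqn::radial_sle_8/3} then gives $\PP[\eta\cap B(x,\eps)\neq\emptyset]=O(\eps^{2})$ for that particular restriction sample. To propagate the $O(\eps^{2})$ bound to an arbitrary $K$, I would iterate the nested-balls restriction identity: for $\eps_1<\eps_2$,
\[\PP\bigl[K\cap B(x,\eps_{2})=\emptyset\bigr]=\PP\bigl[K\cap B(x,\eps_{1})=\emptyset\bigr]\cdot\PP\bigl[K\cap\Phi_{B(x,\eps_{1})}(B(x,\eps_{2})\setminus B(x,\eps_{1}))=\emptyset\bigr],\]
along a dyadic sequence $\eps_{n}=\eps_{0}/2^{n}$. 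Using that $\Phi_{B(x,\eps_{n})}$ is a $(1+O(\eps_{n}^{2}))$-bi-Lipschitz perturbation away from $x$ and combining with the perfect-curve lower-bound construction applied to each half-annulus factor, the $-\log$ factors form a convergent geometric series of total size $O(\eps_{0}^{2})$, yielding the matching upper bound.

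Once the bounds are in hand, existence of the limit follows from the same identity: writing $h(\eps)=-\log\PP[K\cap B(x,\eps)=\emptyset]$, successive differences along the dyadic scale are $o(\eps^{2})$ by the derivative expansions, so $\eps^{-2}h(\eps)$ is Cauchy and its limit defines $\lambda(x)\in(0,\infty)$. The principal obstacle is the upper bound: the exponent form $|\Phi_A'(0)|^{\alpha}\Phi_A'(1)^{\beta}$ (the subject of Proposition \ref{prop::radial_restriction_characterization}) is not yet available, and any naive monotone coupling to SLE$_{8/3}$ runs in the wrong direction. Circular reasoning must be avoided by exploiting only the universal scaling furnished by Lemma \ref{lem::definition_nu} together with the smoothness of the conformal maps involved.
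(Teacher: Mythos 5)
Your lower bound is sound and is in fact a fragment of the paper's own argument: a perfect-curve segment of capacity $\asymp\eps^{2}$ sits inside the half-disc, and Lemma~\ref{lem::definition_nu} converts this into $p_{\eps}(x)\gtrsim\eps^{2}$. The gap is in the upper bound and in the passage from two-sided bounds to an actual limit. You correctly note that a monotone comparison with SLE$_{8/3}$ runs the wrong way, but your proposed fix does too: the perfect-curve segment lies \emph{inside} the half-disc (or half-annulus), so Lemma~\ref{lem::definition_nu} yields an upper bound on $\PP[K\cap B=\emptyset]$, hence a \emph{lower} bound on $-\log\PP[K\cap B=\emptyset]$ --- not the upper bound that each factor in your dyadic telescoping requires. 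Without a matching upper bound on each half-annulus term, the claimed geometric series has no summable majorant; and bounding each annulus term by the ball it sits inside is circular, since that is exactly the estimate you are trying to prove. Separately, even with $p_{\eps}(x)\asymp\eps^{2}$ in hand, the assertion that ``successive differences along the dyadic scale are $o(\eps^{2})$'' is left unjustified and would not by itself give convergence of $\eps^{-2}h(\eps)$ for a general sequence $\eps\to 0$.

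The paper avoids all of this with a single device that your proposal does not contain: iterate the one map $f_{x,\eps}$ exactly $N(\eps)=\lceil\eps^{-2}\rceil$ times, producing a ``tube'' $A_{\eps}(x)$ whose capacity is essentially fixed at $t_{x}$, independently of $\eps$. The restriction property then gives the \emph{exact} identity $\PP[K\cap A_{\eps}(x)=\emptyset]=(1-p_{\eps}(x))^{N(\eps)}$, and since $A_{\eps}(x)\supset\eta^{x}([0,t_{x}])$ with $A_{\eps}(x)\to\eta^{x}([0,t_{x}])$ in Hausdorff metric, the left side converges to $\exp(-\nu(\theta)t_{x})$ by Lemma~\ref{lem::definition_nu}. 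This yields existence and positivity of $\lim\eps^{-2}p_{\eps}(x)$ in one stroke, with no separate upper-bound step and no Cauchy argument for $\eps^{-2}h(\eps)$. I would rework your proof around that construction rather than around nested annuli.
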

\begin{proof}
Fix $x\in(0,\infty)$ and let $\theta\in(0,\pi)$ such that $x=\sin\theta/(1+\cos\theta)$. Let $\eta^x$ be the perfect radial curve in $\HH$ started from $x$ and ended at $i$ which is the image of the perfect radial curve in $\U$ started from $e^{i\theta}$ and ended at the origin under the conformal map $\varphi_0$.
\begin{figure}[ht!]
\begin{center}
\includegraphics[width=0.5\textwidth]{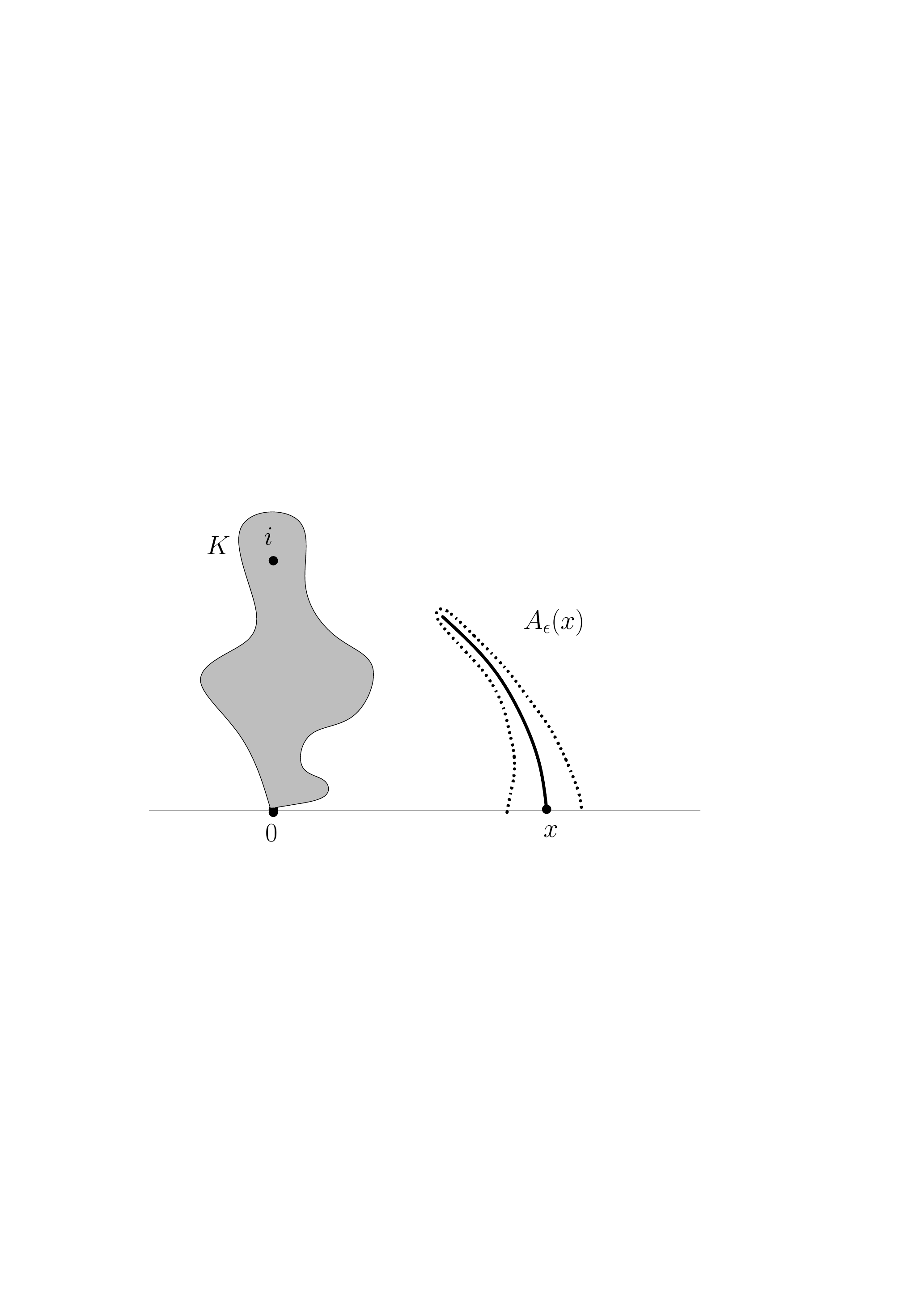}
\end{center}
\caption{\label{fig::approximation_tube} $A_{\eps}(x)$ converges to $\eta^{x}([0,t_x])$ in Hausdorff metric.}
\end{figure}
For $\eps>0$, define $N(\eps)=\lceil\eps^{-2}\rceil$. And $\varphi_1=\cdots=\varphi_N=f_{x,\eps}$. Let $\Phi_{\eps}=\varphi_{N(\eps)}\circ\cdots\circ\varphi_1$. Note that $\Phi_{\eps}$ is a conformal map from $H:=\varphi_1^{-1}\circ\cdots\circ\varphi_{N(\eps)}^{-1}(\HH)$ onto $\HH$ that preserves $i$ and 0. Define $A_{\eps}(x)=\overline{\HH\setminus H}$ (see Figure \ref{fig::approximation_tube}). Then we have that,
\[A_{\eps}(x)\to \eta^x([0,t_x])\quad\text{as}\quad \eps\to 0\]
where $t_x=(1+\cos\theta)^2$ by direct computation of the capacity of $A_{\eps}(x)$ in $\HH$ seen from $i$. And the convergence is under Hausdorff metric. Furthermore, we have that,
\[A_{\eps}(x)\supset \eta^x([0,t_x]).\]
In fact, this is true when $|x|$ is large where $\eta^x$ is very close to vertical line. And this fact does not depend on the location of $x$.

Define $p_{\eps}(x)=\PP\bigl[K\cap B(x,\eps)\neq \emptyset\bigr]$. On the one hand, from conformal restriction property, we know that
\[\PP\bigl[K\cap A_{\eps}(x)=\emptyset\bigr]=(1-p_{\eps}(x))^{N(\eps)}.\]
On the other hand, we know that
\[\PP\bigl[K\cap A_{\eps}(x)=\emptyset\bigr]\to \PP\bigl[K\cap\eta^x([0,t_x])=\emptyset\bigr]=\exp(-\nu(\theta)t_x) \quad\text{as}\quad\eps\to 0.\]
Compare these two relations, we have that
\[\lim_{\eps\to 0}N(\eps)\log(1-p_{\eps}(x))=-\nu(\theta)(1+\cos\theta)^2.\] This completes the proof. And we further know that
\begin{equation}\label{eqn::relation_lambda_nu}
\lambda(\frac{\sin\theta}{1+\cos\theta})=\nu(\theta)(1+\cos\theta)^2.
\end{equation}
\end{proof}

Lemmas \ref{lem::lambda_uniformconvergence} to \ref{lem::lambda_derivative_convergence} show the regularities of the function $\lambda$. To make the proofs easier to follow, we summarize the notations and the basic properties here.
\begin{equation}\label{eqn::lambdarelatednotations}
\begin{split}
p_{\eps}(x)&:=\PP[K\cap B(x,\eps)\neq\emptyset]\\
\lambda(x)&:=-\log\PP[K\cap\eta^x([0,t_x])=\emptyset]\\
\lambda_{\eps}(x)&:=-\log\PP[K\cap A_{\eps}(x)=\emptyset]=-N(\eps)\log(1-p_{\eps}(x))
\end{split}\end{equation}
Let $F^x$ (resp. $F^x_{\eps}$) be the conformal map from $\HH\setminus\eta^x([0,t_x])$ (resp. $\HH\setminus A_{\eps}(x)$) onto $\HH$ that preserves $i$ and 0. Fix a compact interval $I\subset(-\infty,0)\cup(0,\infty)$.

We know that
\[A_{\eps}(x)\supset \eta^x([0,t_x]),\quad \text{and}\quad A_{\eps}(x)\to \eta^x([0,t_x])\quad\text{as}\quad \eps\to 0.\]
Thus $\lambda_{\eps}(x)$ and $p_{\eps}(x)/\eps^2$ converge to $\lambda(x)$ as $\eps$ goes to zero. Since $x\mapsto \eta^x([0,t_x])$ is continuous in Hausdorff metric, we also know that $\lambda$ is a continuous function.
\begin{lemma}\label{lem::lambda_uniformconvergence}
The functions $\lambda_{\eps}(\cdot)$ converges to $\lambda(\cdot)$ uniformly over $I$. Furthermore, for $x\in I$,
\begin{equation}\label{eqn::p_uniformasymptotic}
p_{\eps}(x)\asymp \eps^2
\end{equation}
where the constants in $\asymp$ only depend on $I$.
\end{lemma}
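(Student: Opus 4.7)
\medbreak

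The plan is to reduce the uniform convergence statement to (i) a uniform Hausdorff convergence $A_\eps(x)\to\eta^x([0,t_x])$ as $\eps\to 0$ for $x\in I$, and (ii) the joint continuity of $A\mapsto \PP[K\cap A=\emptyset]$ in Hausdorff metric. Once uniform convergence of $\lambda_\eps$ to $\lambda$ is in hand, together with the continuity and strict positivity of $\lambda$ on the compact set $I$ (inherited from Lemma \ref{lem::lambda_definition} and the continuity of $x\mapsto \eta^x([0,t_x])$), the two-sided estimate $p_\eps(x)\asymp \eps^2$ follows from the identity $\lambda_\eps(x)=-N(\eps)\log(1-p_\eps(x))$.

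For step (i), I would exploit the fact that $x\mapsto \eta^x([0,t_x])$ is continuous in Hausdorff metric on the compact set $I$, so $\LK_I:=\{\eta^x([0,t_x]):x\in I\}$ is compact. For each fixed $x$ the construction of $A_\eps(x)$ via the iterated conformal maps $f_{x,\eps}$ and the explicit form of $g_{x,\eps}$ gives $d_H(A_\eps(x),\eta^x([0,t_x]))\to 0$; inspection of the formulas shows that the relevant error estimates are locally uniform in $x$ (they depend only on $\mathrm{dist}(x,\{0,\infty\})$, which is bounded away from both endpoints for $x\in I$). So a straightforward compactness/finite-cover argument over $I$ upgrades the pointwise Hausdorff convergence to uniform Hausdorff convergence on $I$.

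For step (ii), recall that $\PP[K\cap\cdot=\emptyset]$ is continuous in Hausdorff metric on $\LA^r$ (noted after Lemma \ref{lem::unicity_law}). Applied to the compact family $\LK_I$ of limit sets and a neighbourhood of it in $\LA^r$, this gives that $\lambda_\eps(x)=-\log\PP[K\cap A_\eps(x)=\emptyset]\to \lambda(x)=-\log\PP[K\cap\eta^x([0,t_x])=\emptyset]$ uniformly in $x\in I$. By Lemma \ref{lem::lambda_definition} and the continuity of $\lambda$, there exist $0<m\le M<\infty$ such that $m\le \lambda(x)\le M$ for all $x\in I$; hence $m/2\le \lambda_\eps(x)\le 2M$ for all $\eps$ small enough and all $x\in I$.

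Finally, from $-N(\eps)\log(1-p_\eps(x))=\lambda_\eps(x)$ one gets
\[p_\eps(x)=1-\exp\!\left(-\frac{\lambda_\eps(x)}{N(\eps)}\right).\]
Since $\lambda_\eps(x)/N(\eps)\to 0$ uniformly and $1-e^{-u}\asymp u$ for $u$ in a bounded neighbourhood of $0$, we deduce $p_\eps(x)\asymp \lambda_\eps(x)/N(\eps)\asymp \eps^2$ with constants depending only on $m,M$, hence only on $I$. The main obstacle is step (i): verifying that the Hausdorff error in the approximation $A_\eps(x)\to \eta^x([0,t_x])$ is controlled uniformly in $x\in I$, which requires tracking the dependence of the composition $f_{x,\eps}\circ\cdots\circ f_{x,\eps}$ on the base point $x$. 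Everything else is soft continuity/compactness.
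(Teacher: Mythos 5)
Your plan is correct in outline but takes a genuinely different route from the paper. The paper does not pass through uniform Hausdorff convergence followed by an abstract uniform‑continuity argument. Instead it exploits the containment $A_\eps(x)\supset\eta^x([0,t_x])$ (stated just before the lemma) together with the restriction property to produce an exact identity:
\[
\exp(-\lambda(x))-\exp(-\lambda_\eps(x))
=\exp(-\lambda(x))\,\PP\bigl[K\cap F^x(A_\eps(x))\neq\emptyset\bigr],
\]
where $F^x$ is the normalizing map for $\HH\setminus\eta^x([0,t_x])$; the containment makes $\{K\cap A_\eps(x)=\emptyset\}$ a subevent of $\{K\cap\eta^x([0,t_x])=\emptyset\}$, so the difference of probabilities is a single probability, which one can then condition and map forward. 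The whole problem is thereby reduced to showing that $F^x(A_\eps(x))$ is contained in a $C\eps$-neighbourhood of a fixed compact interval $J$, with $C,J$ depending only on $I$ — a concrete geometric estimate obtained from the H\"older/Lipschitz regularity of $F^x$ and the radii profile of the $N(\eps)$ semi-discs building $A_\eps(x)$. This has the practical advantage that the proof is quantitative, and the very same identity pattern is reused in Lemmas \ref{lem::lambda_holdercontinuity} and \ref{lem::lambda_derivative_convergence} to get Lipschitz-type bounds; your softer compactness argument does not directly produce those bounds.

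Your route is still viable for the lemma as stated, but two points need more care than you give them. First, the main step you rightly flag — uniform-in-$x$ Hausdorff convergence of $A_\eps(x)$ — is not purely ``soft''; the paper sidesteps proving it as such and instead controls the explicit set $F^x(A_\eps(x))$ directly. Second, replacing the paper's exact identity by ``continuity of $A\mapsto\PP[K\cap A=\emptyset]$'' requires \emph{uniform} continuity, hence compactness of a set in $\LA^r$ containing all $A_\eps(x)$ and all $\eta^x([0,t_x])$, together with the verification that Hausdorff limits of these hulls remain in $\LA^r$; this is true here but worth stating explicitly, since in general Hausdorff neighbourhoods in $\LA^r$ are not compact. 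Once those two points are secured, your derivation of $p_\eps(x)\asymp\eps^2$ from $p_\eps(x)=1-\exp(-\lambda_\eps(x)/N(\eps))$, the uniform bounds $m/2\le\lambda_\eps\le 2M$, and $1-e^{-u}\asymp u$ is correct and matches the logic of the paper.
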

\begin{proof}
For $x\in I, \eps>0$, we have that
\begin{eqnarray}\label{eqn::lambda_difference_analysis}
\lefteqn{\exp(-\lambda(x))-\exp(-\lambda_{\eps}(x))}\notag\\
&=&\PP[K\cap \eta^x([0,t_x])=\emptyset]-\PP[K\cap A_{\eps}(x)=\emptyset]\notag\\
&=&\PP[K\cap \eta^x([0,t_x])=\emptyset, K\cap A_{\eps}(x)\neq\emptyset]\notag\\
&=&\exp(-\lambda(x))\PP[K\cap F^x(A_{\eps}(x))\neq\emptyset].
\end{eqnarray}
Now we will argue that the set $F^x(A_{\eps}(x))$ is uniformly small. The conformal map $F^x$ is Lipschitz when it is bounded away from the tip of $\eta^x([0,t_x])$, whereas it is $1/2$-H\"{o}lder at the tip of $\eta^x([0,t_x])$. However, the semi-disc at the tip of $A_{\eps}(x)$ also has radius of order $\eps^2$, i.e. the radii of the $N(\eps)$ semi-discs in $A_{\eps}(x)$ decrease gradually and the last one has radius bounded by a universal constant times $\eps^2$. (This fact is implicitly used later in the paper.) Thus, there exist compact interval $J$ and constant $C$ depending on $I$ such that $F^x(A_{\eps}(x))$ can be covered by $J^{C\eps}$ which is $C\eps$-neighborhood of $J$. Then
\[|1-\exp(\lambda(x)-\lambda_{\eps}(x))|\le \PP[K\cap J^{C\eps}\neq\emptyset]\]
where $\PP[K\cap J^{C\eps}\neq\emptyset]$ converges to zero as $\eps$ goes to zero. This completes the proof of uniform convergence.

Equation (\ref{eqn::p_uniformasymptotic}) can then be derived by combining the uniform convergence, the relation between $\lambda_{\eps}(x)$ and $p_{\eps}(x)$ in Equation (\ref{eqn::lambdarelatednotations}), and the continuity of $\lambda$.
\end{proof}
\begin{lemma}\label{lem::lambda_holdercontinuity}
For any $x, y\in I$, and $\eps>0,\delta>0$, we have
\begin{equation}\label{eqn::lambda_holdercontinuity_eps}
|\lambda_{\eps}(x)-\lambda_{\delta}(x)|\lesssim |\delta-\eps|\end{equation}
\begin{equation}\label{eqn::lambda_holdercontinuity_x}
|\lambda_{\eps}(x)-\lambda_{\eps}(y)|\lesssim |x-y|\end{equation}
where the constant in $\lesssim$ only depends on $I$. In particular, we have
\[|\lambda(x)-\lambda(y)|\lesssim |x-y|\]
where the constant in $\lesssim$ only depends on $I$. Thus, $\lambda$ is almost everywhere differentiable, i.e. $\lambda$ is differentiable except on a Lebesgue measure zero set.
\end{lemma}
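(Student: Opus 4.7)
The strategy is to reduce all three inequalities to estimates on differences of the hitting probabilities $p_\eps(x)$ and then analyze those differences via the radial restriction property. Combining $N(\eps)=\eps^{-2}+O(1)$, the Taylor expansion $-\log(1-p)=p+O(p^2)$, and the uniform asymptotic $p_\eps(x)\asymp\eps^2$ from Lemma \ref{lem::lambda_uniformconvergence}, one obtains
\[\lambda_\eps(x)=\frac{p_\eps(x)}{\eps^2}+O(\eps^2),\]
with the implicit constant depending only on $I$. Hence differences of $\lambda_\eps$ reduce to differences of $p_\eps(\cdot)/\eps^2$ modulo additive errors of order $\eps^2+\delta^2$.

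For inequality (\ref{eqn::lambda_holdercontinuity_eps}), assume $\eps<\delta$. The plan is to write
\[p_\delta(x)-p_\eps(x)=\PP\bigl[K\cap B(x,\delta)\neq\emptyset,\ K\cap B(x,\eps)=\emptyset\bigr],\]
condition on $\{K\cap B(x,\eps)=\emptyset\}$, and apply the radial restriction property through $f_{x,\eps}$: the conditional probability becomes $\PP[K\cap A'\neq\emptyset]$ where $A':=f_{x,\eps}(B(x,\delta)\setminus\overline{B(x,\eps)})$. A direct computation with $g_{x,\eps}(z)=z+\eps^2/(z-x)$ shows $\partial B(x,\eps)$ collapses onto the segment $[x-2\eps,x+2\eps]$, and that $A'$ is contained in a thin ellipse-like region of length $O(\eps)$ and thickness $O(\delta-\eps)$ adjacent to this segment. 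A careful covering of $A'$ by discs combined with the uniform bound $\PP[K\cap B(z,r)\neq\emptyset]\lesssim r^2$ should yield the required estimate.

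The argument for (\ref{eqn::lambda_holdercontinuity_x}) is analogous. Using
\[|p_\eps(x)-p_\eps(y)|\le\PP\bigl[K\cap(B(y,\eps)\setminus B(x,\eps))\neq\emptyset\bigr]+\PP\bigl[K\cap(B(x,\eps)\setminus B(y,\eps))\neq\emptyset\bigr],\]
one applies restriction through $f_{x,\eps}$ (respectively $f_{y,\eps}$) to each term, reducing to the hitting probability of the image of a crescent $B(y,\eps)\setminus B(x,\eps)$. That image again lies in a thin region of length $O(\eps)$ and thickness $O(|x-y|)$ close to the image slit, and the same type of covering estimate applies.

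Finally, the concluding bound $|\lambda(x)-\lambda(y)|\lesssim|x-y|$ follows by passing to the limit $\eps\to 0$ in (\ref{eqn::lambda_holdercontinuity_x}) using the uniform convergence from Lemma \ref{lem::lambda_uniformconvergence}, and a Lipschitz function of one real variable is automatically differentiable almost everywhere by Lebesgue's theorem. The main difficulty lies in the covering/hitting estimates: one must make the constants genuinely uniform in $\eps$, so that the amplification factor $N(\eps)\sim\eps^{-2}$ in $\lambda_\eps$ cancels exactly against the geometric scale of the thin image set $A'$, leaving a Lipschitz constant that depends only on $I$ and not on $\eps$. This is where one needs to exploit the explicit form of $f_{x,\eps}$ rather than rely on a naive union bound; the fact that the image sets have a distinguished long and short direction, together with the restriction property applied on the image side, is what supplies the missing scale factor.
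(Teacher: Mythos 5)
Your plan and the paper's proof diverge in a way that matters. The paper never passes through the single-ball probabilities $p_\eps$ at all: it works directly with $\exp(-\lambda_\eps(x))=\PP[K\cap A_\eps(x)=\emptyset]$, peels off the full approximating tube $A_\eps(x)$ with the restriction map $F^x_\eps$, and covers the image $F^x_\eps(A_\delta(x))$ (which stretches over a scale of order $1$, from near $0$ to near $i$) by $\lceil C/|\delta-\eps|\rceil$ balls of radius $C|\delta-\eps|$. Your reduction to $p_\eps(\cdot)/\eps^2$ runs into two concrete obstructions that the tube argument is specifically designed to sidestep.

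First, the additive error in $\lambda_\eps(x)=p_\eps(x)/\eps^2+O(\eps^2)$ is irreducible, because it comes in part from the rounding $N(\eps)=\lceil\eps^{-2}\rceil$, which is a non-smooth $O(1)$ perturbation of $\eps^{-2}$. Consequently the best you can assert is $|\lambda_\eps(x)-\lambda_\delta(x)|\le\bigl|p_\eps(x)/\eps^2-p_\delta(x)/\delta^2\bigr|+O(\eps^2+\delta^2)$, and $\eps^2+\delta^2$ is \emph{not} $\lesssim|\delta-\eps|$ when $\eps$ and $\delta$ are close. So even a perfect estimate of the $p$-differences cannot deliver \eqref{eqn::lambda_holdercontinuity_eps} uniformly over $\eps,\delta>0$. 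The paper has no such error term because it estimates $\exp(-\lambda_\eps(x))-\exp(-\lambda_\delta(x))$ directly, with no Taylor expansion of the logarithm and no appeal to $N(\eps)\approx\eps^{-2}$.

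Second, and quite apart from the additive error, the covering bound you propose is short by a factor of $\eps$. After conditioning on $K\cap B(x,\eps)=\emptyset$ and mapping by $f_{x,\eps}$, the sliver $A'$ has length $O(\eps)$ and thickness $O(|\delta-\eps|)$ (resp.\ $O(|x-y|)$ for the crescent in \eqref{eqn::lambda_holdercontinuity_x}), so a union bound over discs of radius comparable to the thickness gives $\PP[K\cap A'\neq\emptyset]\lesssim\eps\,|\delta-\eps|$. But to pass to $\lambda_\eps$ you divide by $\eps^2$, and you would end up with $|\delta-\eps|/\eps$ (resp.\ $|x-y|/\eps$), which is not uniform in $\eps$. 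Equivalently, the statement you actually need at the level of single-ball probabilities is $|p_\eps(x)-p_\eps(y)|\lesssim\eps^2|x-y|$, and your covering argument only yields $\eps|x-y|$. The extra factor does not come from the aspect ratio of $A'$; it comes from the fact that the paper removes the \emph{entire} tube $A_\eps(x)$ rather than a single ball, so the conformal image over which one covers has macroscopic (order $1$) length, and the naive union bound with $\lceil C/|\delta-\eps|\rceil$ radius-$C|\delta-\eps|$ discs then gives exactly $\lesssim|\delta-\eps|$ with no spare power of $\eps$ to lose. You gesture at ``the restriction property applied on the image side'' as the source of the missing factor, but as written there is no step that recovers it; that is precisely the gap.

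The last two sentences of your proposal (passing to the limit using Lemma~\ref{lem::lambda_uniformconvergence}, then invoking Lebesgue's theorem on a.e.\ differentiability of Lipschitz functions) are fine and match the paper.
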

\begin{proof}
We will show Equation (\ref{eqn::lambda_holdercontinuity_eps}) and then Equation (\ref{eqn::lambda_holdercontinuity_x}) can be proved similarly.

Suppose $\delta>\eps>0$. Recall that $F^x_{\eps}$ is the conformal map from $\HH\setminus A_{\eps}(x)$ onto $\HH$ that fixes $i$ and 0. Then we have that
\begin{eqnarray*}
\lefteqn{\exp(-\lambda_{\eps}(x))-\exp(-\lambda_{\delta}(x))}\\
&=&\PP[K\cap A_{\eps}(x)=\emptyset]-\PP[K\cap A_{\delta}(x)=\emptyset]\\
&=&\PP[K\cap A_{\eps}(x)=\emptyset, K\cap A_{\delta}(x)\neq\emptyset]\\
&=&\exp(-\lambda_{\eps}(x))\PP[K\cap F_{\eps}^x(A_{\delta}(x))\neq\emptyset].
\end{eqnarray*}
There exists a constant $C$ depending only on $I$ such that $F_{\eps}^x(A_{\delta}(x))$ can be covered by $\lceil C/|\delta-\eps|\rceil$ balls of radius $C|\delta-\eps|$. Combine with Equation (\ref{eqn::p_uniformasymptotic}), we have that
\[\PP[K\cap F^x_{\eps}(A_{\delta}(x))\neq\emptyset]\lesssim |\delta-\eps|.\]
This completes the proof.
\end{proof}
\begin{lemma}\label{lem::lambda_derivative_convergence}
For any $x,y\in I$ and $\eps>0$, we have
\[|\left(\lambda_{\eps}(x)-\lambda(x)\right)-\left(\lambda_{\eps}(y)-\lambda(y)\right)|\lesssim |x-y|\eps\]
where the constant in $\lesssim$ only depends on $I$.
\end{lemma}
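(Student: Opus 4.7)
The plan is to apply the radial restriction property to rewrite $\lambda_\eps(x) - \lambda(x)$ as the negative log of a hitting probability of a thin set, and then compare this probability across $x$ and $y$ through a geometric estimate on the symmetric difference.

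First I would apply the restriction property with $A = \eta^x([0,t_x])$ (which lies in $\LA^r$ after conformal identification with $\U$) and use that $\eta^x([0,t_x]) \subseteq A_\eps(x)$. Conditional on $\{K\cap\eta^x([0,t_x]) = \emptyset\}$, the image $F^x(K)$ has the same law as $K$, so
\[\PP[K\cap A_\eps(x) = \emptyset] = \PP[K\cap\eta^x([0,t_x]) = \emptyset]\cdot\PP[K\cap \tilde A_\eps(x) = \emptyset]\]
where $\tilde A_\eps(x) := F^x(A_\eps(x)\setminus\eta^x([0,t_x]))$. Taking logarithms gives $\lambda_\eps(x) - \lambda(x) = -\log\PP[K\cap\tilde A_\eps(x) = \emptyset]$, and likewise for $y$.

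Setting $P_x := \PP[K\cap\tilde A_\eps(x) \neq \emptyset]$, I would next observe (as in the proof of Lemma \ref{lem::lambda_uniformconvergence}) that $\tilde A_\eps(x)$ lies in a $C\eps$-neighborhood of a compact $J\subset\R$ uniform in $x \in I$, coverable by $O(1/\eps)$ balls of radius $\sim\eps$, so the bound $p_\eps(\cdot) \asymp \eps^2$ yields $P_x \lesssim \eps$ uniformly. A Taylor expansion of $-\log(1-p)$ then gives
\[(\lambda_\eps(x) - \lambda(x)) - (\lambda_\eps(y) - \lambda(y)) = (P_x - P_y)(1 + O(\eps)) + O((P_x - P_y)^2),\]
so it suffices to prove $|P_x - P_y| \lesssim |x-y|\eps$. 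Using $|P_x - P_y| \le \PP[K\cap(\tilde A_\eps(x) \triangle \tilde A_\eps(y)) \neq \emptyset]$, this reduces to a hitting estimate on the symmetric difference.

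For the geometric step, both $\tilde A_\eps(x)$ and $\tilde A_\eps(y)$ are thin strips of width $\sim \eps$ along compact intervals of length $\sim 1$. Because the perfect radial curves $\eta^x$ and the conformal maps $F^x$ depend smoothly on $x$ (they arise from deterministic ODEs with smooth $x$-dependent coefficients), $\tilde A_\eps(x)$ and $\tilde A_\eps(y)$ are displaced versions of each other with displacement of order $|x-y|$. Their symmetric difference is then contained in a region of dimensions roughly $|x-y|\times\eps$, coverable by $\lceil |x-y|/r\rceil$ balls of radius $r := \max(\eps,|x-y|)$. Combined with the pointwise bound $\PP[K\cap B(z,r)\neq\emptyset] \lesssim r^2$ for $z$ in a fixed compact subset of $\overline\HH$ bounded away from $\{0,i\}$ (which follows by the same scaling argument that gives $p_\eps(x)\asymp\eps^2$ in Lemma \ref{lem::lambda_uniformconvergence}), this yields the target bound $\lesssim |x-y|\eps$ in both regimes $|x-y| \le \eps$ and $|x-y|\ge\eps$. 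The main obstacle is precisely this last geometric comparison: while smooth $x$-dependence of the radial Loewner flow makes the result intuitively clear, propagating the $|x-y|$-closeness through the $N(\eps) \sim \eps^{-2}$ iterated conformal maps $f_{x,\eps}$ defining $A_\eps(\cdot)$ and then through the slit map $F^x$ is a careful deterministic conformal-geometric computation.
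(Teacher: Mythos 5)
Your overall framework is the same as the paper's: you rewrite $\lambda_\eps(\cdot)-\lambda(\cdot)=-\log\PP\bigl[K\cap F^\cdot(A_\eps(\cdot))=\emptyset\bigr]$ (paper's Equation (\ref{eqn::lambda_difference_analysis})), reduce the claim via a log-expansion to $|P_x-P_y|\lesssim |x-y|\eps$, bound this by the hitting probability of the symmetric difference of $F^x(A_\eps(x))$ and $F^y(A_\eps(y))$, and then propose to cover that thin set by discs and invoke the uniform two-point estimate $p_r(\cdot)\lesssim r^2$ from Lemma \ref{lem::lambda_uniformconvergence}. That is exactly the structure of the paper's argument.

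However, your covering scheme is arithmetically wrong and does not produce the stated bound. If the symmetric difference sits in a region of dimensions roughly $|x-y|\times\eps$ and you cover it by $\lceil|x-y|/r\rceil$ discs of radius $r=\max(\eps,|x-y|)$, the resulting estimate is $\lceil|x-y|/r\rceil\cdot r^2\approx r^2=\max(\eps,|x-y|)^2$. This is $\ge |x-y|\eps$ in \emph{both} regimes, with equality only at $|x-y|=\eps$; in particular for $|x-y|\ll\eps$ you get $\eps^2$, not $|x-y|\eps$, and for $|x-y|\gg\eps$ you get $|x-y|^2$, also too large. Covering a thin set with a single big disc is exactly the wrong move here: the probability bound scales with the square of the \emph{radius}, so you want many small discs, of radius comparable to the \emph{short} side of the region, namely $\min(\eps,|x-y|)$, so that the count becomes $\lceil\max/\min\rceil$ and the total is $\max\cdot\min=|x-y|\eps$. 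The paper's covering is $\lceil C|x-y|/\eps\rceil$ discs of radius $C\eps$, which matches this recipe in the regime $|x-y|\gtrsim\eps$ and directly gives $\lesssim|x-y|\eps$. Aside from this quantitative slip, you correctly identify that the underlying geometric statement (that $F^y(A_\eps(y))\setminus F^x(A_\eps(x))$ really is confined to a region of area comparable to $|x-y|\eps$, trackable through the $\sim\eps^{-2}$-fold iterated maps and the slit maps $F^x$) is the substantive content; the paper, like you, asserts it rather than proving it in full.
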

\begin{proof}
In Equation (\ref{eqn::lambda_difference_analysis}), we already see that
\[1-\exp(\lambda(x)-\lambda_{\eps}(x))=\PP[K\cap F^x(A_{\eps}(x))\neq\emptyset],\]
\[1-\exp(\lambda(y)-\lambda_{\eps}(y))=\PP[K\cap F^y(A_{\eps}(y))\neq\emptyset].\]
Thus
\begin{eqnarray*}
\lefteqn{\exp(\lambda(y)-\lambda_{\eps}(y))-\exp(\lambda(x)-\lambda_{\eps}(x))}\\
&=&\PP[K\cap F^x(A_{\eps}(x))\neq\emptyset]-\PP[K\cap F^y(A_{\eps}(y))\neq\emptyset]\\
&=&\PP[K\cap F^x(A_{\eps}(x))\neq\emptyset, K\cap F^y(A_{\eps}(y))=\emptyset]\\
&&-\PP[K\cap F^x(A_{\eps}(x))=\emptyset, K\cap F^y(A_{\eps}(y))\neq\emptyset].
\end{eqnarray*}

There exists constant $C$ depending only on $I$ such that the set $F^y(A_{\eps}(y))\setminus F^x(A_{\eps}(x))$
can be covered by $\lceil C|x-y|/\eps\rceil$ balls of radius $C\eps$. Together with Equation (\ref{eqn::p_uniformasymptotic}), we have that
\[\PP[K\cap F^x(A_{\eps}(x))=\emptyset, K\cap F^y(A_{\eps}(y))\neq\emptyset]\lesssim |x-y|\eps.\]
Thus
\[|\exp(\lambda(y)-\lambda_{\eps}(y))-\exp(\lambda(x)-\lambda_{\eps}(x))|\lesssim |x-y|\eps\]
which completes the proof.
\end{proof}

Fix $x,y\in\R\setminus\{0\}$, define
\[F(x,y)=\lim_{\eps\to 0}\frac{1}{\eps^2}(f_{x,\eps}(y)-y),\quad G(x,y)=\lim_{\eps\to 0}\frac{1}{\eps^2}(f'_{x,\eps}(y)-1).\]
By direct computation, we have that
\begin{equation}\label{eqn::two_kernels}
F(x,y)=\frac{1+x^2+y^2+xy}{x(1+x^2)}+\frac{1}{y-x},\quad G(x,y)=\frac{x+2y}{x(1+x^2)}-\frac{1}{(y-x)^2}.
\end{equation}

\begin{lemma} \label{lem::lambda_differentiability}
The function $\lambda$ defined in Lemma \ref{lem::lambda_definition} is differentiable in $x\in (-\infty,0)\cup(0,\infty)$ and satisfies the following \textit{commutation relation}: for any $x,y\in\R\setminus\{0\}$,
\begin{equation}\label{eqn::commutation_relation}
\lambda'(y)F(x,y)+2\lambda(y)G(x,y)=\lambda'(x)F(y,x)+2\lambda(x)G(y,x).
\end{equation}
\end{lemma}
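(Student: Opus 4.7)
The plan is to compute $\PP[K\cap B(x,\eps)=\emptyset,\,K\cap B(y,\delta)=\emptyset]$ in two ways by applying the radial conformal restriction property---first removing the disc at $x$ and then the one at $y$, then in the opposite order---and to equate the two expansions at order $\eps^2\delta^2$. Since at this stage we only know that $\lambda$ is Lipschitz on $I$ (hence differentiable Lebesgue-almost everywhere), I would first obtain the commutation relation at pairs $(x,y)$ where $\lambda'$ exists at both points, and then use the relation itself to bootstrap to everywhere $C^1$ regularity.

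First I would set up the basic identity. By the conformal restriction property applied to $A=B(x,\eps)$, the image of $K$ under $f_{x,\eps}$ (conditioned on $K\cap B(x,\eps)=\emptyset$) is again a radial restriction sample, so
\[
\PP\bigl[K\cap B(x,\eps)=\emptyset,\,K\cap B(y,\delta)=\emptyset\bigr]=\PP\bigl[K\cap B(x,\eps)=\emptyset\bigr]\,\PP\bigl[K\cap f_{x,\eps}(B(y,\delta))=\emptyset\bigr],
\]
and symmetrically with the roles of $(x,\eps)$ and $(y,\delta)$ exchanged. Next I would expand each factor to order $\eps^2\delta^2$. The first factor is $1-\eps^2\lambda(x)+o(\eps^2)$ by definition of $\lambda$. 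For the second factor, the set $f_{x,\eps}(B(y,\delta))$ lies within Hausdorff distance $O(\delta^2)$ of the disc $B(f_{x,\eps}(y),|f'_{x,\eps}(y)|\delta)$; the symmetric difference can be covered by $O(1/\delta)$ balls of radius $O(\delta^2)$ each, so by Lemma \ref{lem::lambda_uniformconvergence} ($p_{\eta}\asymp\eta^2$ uniformly on compact sets) the probability that $K$ hits the symmetric difference is $O(\delta^3)=o(\delta^2)$ uniformly in small $\eps$. Thus
\[
\PP\bigl[K\cap f_{x,\eps}(B(y,\delta))=\emptyset\bigr]=1-|f'_{x,\eps}(y)|^{2}\delta^{2}\,\lambda(f_{x,\eps}(y))+o(\delta^{2})
\]
uniformly in small $\eps$.

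At a point $y$ where $\lambda'(y)$ exists, combining the expansions $f_{x,\eps}(y)=y+\eps^{2}F(x,y)+O(\eps^{4})$ and $|f'_{x,\eps}(y)|^{2}=1+2\eps^{2}G(x,y)+O(\eps^{4})$ with the infinitesimal Taylor expansion of $\lambda$ at $y$ gives
\[
|f'_{x,\eps}(y)|^{2}\lambda(f_{x,\eps}(y))=\lambda(y)+\eps^{2}\bigl(\lambda'(y)F(x,y)+2\lambda(y)G(x,y)\bigr)+o(\eps^{2}).
\]
Multiplying the two factors and extracting the coefficient of $\eps^{2}\delta^{2}$ from the resulting expansion, and doing the same from the expansion in the opposite order (assuming $\lambda'(x)$ also exists), the two must agree, so
\[
\lambda'(y)F(x,y)+2\lambda(y)G(x,y)=\lambda'(x)F(y,x)+2\lambda(x)G(y,x)
\]
for Lebesgue-a.e.\ pair $(x,y)$ in $(\R\setminus\{0\})^{2}$.

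Finally I would upgrade a.e.\ differentiability to $C^{1}$ regularity everywhere. Fixing any $x_{0}$ at which $\lambda'(x_{0})$ exists, the commutation relation determines for a.e.\ $y$ with $F(x_{0},y)\neq 0$
\[
\lambda'(y)=\frac{\lambda'(x_{0})F(y,x_{0})+2\lambda(x_{0})G(y,x_{0})-2\lambda(y)G(x_{0},y)}{F(x_{0},y)}.
\]
The right-hand side is continuous in $y$ on $\{y:F(x_{0},y)\neq 0\}$ because $\lambda$ is continuous and $F,G$ are rational; hence $\lambda'$ coincides a.e.\ with a continuous function there. Since $\lambda$ is Lipschitz and therefore absolutely continuous, the fundamental theorem of calculus forces $\lambda\in C^{1}$ on that set, with derivative given by the displayed formula. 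Because $F(x_{0},\cdot)$ has only isolated zeros, varying $x_{0}$ among a countable family of differentiability points covers all of $\R\setminus\{0\}$, yielding differentiability everywhere; the commutation relation then extends to every $(x,y)\in(\R\setminus\{0\})^{2}$ with $x\neq y$ by continuity.

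The main technical obstacle will be the uniform-in-$\eps$ error estimate for $\PP[K\cap f_{x,\eps}(B(y,\delta))=\emptyset]$: one must genuinely control how much $f_{x,\eps}(B(y,\delta))$ differs from a round disc and ensure that the resulting errors are $o(\eps^{2}\delta^{2})$ even when $\eps$ and $\delta$ are of comparable size, which is where the uniform asymptotic $p_{\eta}\asymp\eta^{2}$ from Lemma \ref{lem::lambda_uniformconvergence} is essential.
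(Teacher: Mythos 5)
Your overall strategy matches the paper's: apply the restriction property twice in each order, expand to second order, equate the coefficients, then bootstrap from a.e.\ differentiability to $C^1$ using the commutation relation itself (the paper does exactly this in the last two paragraphs of its proof, solving the relation for $\lambda'$ and observing that the right-hand side is continuous). One cosmetic difference: you work with $\PP[K\cap B(x,\eps)=\emptyset,\ K\cap B(y,\delta)=\emptyset]$, which is close to $1$, whereas the paper considers $\PP[K\cap B(x,\eps)\neq\emptyset,\ K\cap B(y,\delta)\neq\emptyset]$, which is already of order $\eps^2\delta^2$. The latter choice is cleaner because ``extracting the $\eps^2\delta^2$ coefficient'' becomes an honest double limit $\lim_{\eps\to 0}\lim_{\delta\to 0}\eps^{-2}\delta^{-2}\PP[\cdot]$, and the commutation relation becomes a statement that two iterated limits coincide.

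There is, however, a genuine gap where you anticipated a difficulty but underestimated it. You write that the key uniform error estimate ``is where the uniform asymptotic $p_\eta\asymp\eta^2$ from Lemma \ref{lem::lambda_uniformconvergence} is essential,'' and your expansion of the inner factor reads
$\PP[K\cap f_{x,\eps}(B(y,\delta))=\emptyset]=1-|f'_{x,\eps}(y)|^2\delta^2\lambda(f_{x,\eps}(y))+o(\delta^2)$ \emph{uniformly in} $\eps$. Uniformity in $\eps$ of the $o(\delta^2)$ term is indeed a consequence of Lemma \ref{lem::lambda_uniformconvergence}, but it is not enough. To equate the $\eps^2\delta^2$ coefficients from the two orderings, you must show (after the exactly-cancelling terms are removed) that the remaining error is $o(\eps^2\delta^2)$ \emph{jointly}, not merely $o(\delta^2)$ uniformly in $\eps$ on one side and $o(\eps^2)$ uniformly in $\delta$ on the other; an $o(\delta^2)$ error swamps an $\eps^2\delta^2$ term as soon as $\eps\to 0$ faster than the error goes to zero. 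This forces one to compare $p_\delta(y)$ with $\PP[K\cap f_{x,\eps}(B(y,\delta))\neq\emptyset]\approx p_{\tilde\delta}(\tilde y)$ where $\tilde y=f_{x,\eps}(y)$, $\tilde\delta=f'_{x,\eps}(y)\delta$, and the resulting residual involves $\bigl(\lambda_{\tilde\delta}(y)-\lambda(y)\bigr)-\bigl(\lambda_{\tilde\delta}(\tilde y)-\lambda(\tilde y)\bigr)$ (as well as $\lambda_{\delta}(y)-\lambda_{\tilde\delta}(y)$). The quantity $\asymp\eta^2$, or even the uniform convergence $\lambda_\eta\to\lambda$, gives no bound of the required form $O(|y-\tilde y|\tilde\delta)=o(\eps^2)$. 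This is precisely what the paper's Lemma \ref{lem::lambda_derivative_convergence} provides, via the covering estimate $\PP[K\cap F^x(A_{\eps}(x))=\emptyset,\ K\cap F^y(A_{\eps}(y))\neq\emptyset]\lesssim|x-y|\eps$, and why the paper isolates the exchange of limits into the dedicated Lemma \ref{lem::lambda_exchangeorder_limits} (with its four estimates \eqref{eqn::lambda_exchangeorder_estimate1}--\eqref{eqn::lambda_exchangeorder_estimate4}, which also use Lemma \ref{lem::lambda_holdercontinuity}). Your sketch would need to incorporate that finer mixed-difference estimate; invoking only $p_\eta\asymp\eta^2$ leaves the joint $o(\eps^2\delta^2)$ claim unsupported.
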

\begin{proof}
From Lemma \ref{lem::lambda_holdercontinuity}, $\lambda$ is locally Lipschitz continuous in $\R\setminus\{0\}$, it is differentiable almost everywhere, and there exists an integrable function $\omega$ such that, $\lambda'(x)=\omega(x)$ at the point $x$ at which $\lambda$ is differentiable, and, for any $x>y>0$ (or $y<x<0$),
\[\lambda(x)-\lambda(y)=\int_y^x \omega(u)du.\]
\medbreak
Consider two points $x,y$ at which $\lambda$ is differentiable. Let $\eps>0,\delta>0$.
\begin{eqnarray*}\lefteqn{\PP[K\cap B(x,\eps)\neq\emptyset, K\cap B(y,\delta)\neq\emptyset]}\\
&=&\PP[K\cap B(x,\eps)=\emptyset,K\cap B(y,\delta)=\emptyset]-1+p_{\eps}(x)+p_{\delta}(y)\\
&=&\PP\bigl[K\cap B(x,\eps)=\emptyset\bigr]\times\PP\bigl[K\cap f_{x,\eps}(B(y,\delta))=\emptyset\bigr]-1+p_{\eps}(x)+p_{\delta}(y)\\
&=&p_{\delta}(y)-\PP\bigl[K\cap f_{x,\eps}(B(y,\delta))\neq\emptyset\bigr](1-p_{\eps}(x)).\end{eqnarray*}
Divide by $\eps^2\delta^2$ and take the limit, we have that
\begin{eqnarray*}
\lefteqn{\lim_{\eps\to 0}\lim_{\delta\to 0}\frac{1}{\eps^2\delta^2}\PP[K\cap B(x,\eps)\neq\emptyset, K\cap B(y,\delta)\neq\emptyset]}\\
&=&\lim_{\eps\to 0}\lim_{\delta\to 0}\frac{1}{\eps^2\delta^2}\left(p_{\delta}(y)-\PP\bigl[K\cap f_{x,\eps}(B(y,\delta))\neq\emptyset\bigr](1-p_{\eps}(x))\right)\\
&=&\lim_{\eps\to 0}\frac{1}{\eps^2}\left(\lambda(y)-\lambda(f_{x,\eps}(y))|f_{x,\eps}'(y)|^2(1-p_{\eps}(x))\right)\\
&=& \lambda(x)\lambda(y)-\lambda'(y)F(x,y)-2\lambda(y)G(x,y).\end{eqnarray*}
Lemma \ref{lem::lambda_exchangeorder_limits} guarantees that we are allowed to exchange the order of the limits, i.e.
\begin{align*}&\lim_{\eps\to 0}\lim_{\delta\to 0}\frac{1}{\eps^2\delta^2}\PP[K\cap B(x,\eps)\neq\emptyset, K\cap B(y,\delta)\neq\emptyset]\\=&\lim_{\delta\to 0}\lim_{\eps\to 0}\frac{1}{\eps^2\delta^2}\PP[K\cap B(x,\eps)\neq\emptyset, K\cap B(y,\delta)\neq\emptyset].\end{align*}
Then, by the symmetry,
we get Equation \eqref{eqn::commutation_relation} for the points $x,y$ at which $\lambda$ is differentiable.

Fix $y$ in Equation \eqref{eqn::commutation_relation}, we have
\[\lambda'(x)=(\lambda'(y)F(x,y)+2\lambda(y)G(x,y)-2\lambda(x)G(y,x))/F(y,x).\]
The right side is continuous in $x\in \R\setminus\{0,y\}$. Thus we can extend $\omega$ to $\R\setminus\{0,y\}$ by the right side. Then it is clear that $\omega$ is a continuous function in $\R\setminus\{0\}$ and in particular, this implies that $\lambda$ is differentiable everywhere in $\R\setminus\{0\}$ and the derivative satisfies Equation \eqref{eqn::commutation_relation} for any points $x,y\in\R\setminus\{0\}$.

\end{proof}

\begin{lemma}\label{lem::lambda_exchangeorder_limits}
Fix two compact intervals $I,J\subset (-\infty,0)\cup(0,\infty)$. Suppose that $x\in I, y\in J$ and that $\lambda$ is differentiable at $y$, then we have that
\begin{align*}
\PP&[K\cap B(x,\eps)\neq\emptyset,K\cap B(y,\delta)\neq\emptyset]\\
&-\eps^2\delta^2\left(\lambda(x)\lambda(y)-
\lambda'(y)F(x,y)-2\lambda(y)G(x,y)\right)=o(\eps^2\delta^2).\end{align*}
\end{lemma}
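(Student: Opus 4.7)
Proof plan. The starting point is the identity already derived in the proof of the previous lemma,
\[\PP[K\cap B(x,\eps)\neq\emptyset, K\cap B(y,\delta)\neq\emptyset] = (p_\delta(y)-q) + p_\eps(x)\,q,\]
where $q := \PP[K\cap f_{x,\eps}(B(y,\delta))\neq\emptyset]$. I would estimate each summand to leading order $\eps^2\delta^2$ with joint error $o(\eps^2\delta^2)$.

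A useful preliminary: letting $\delta\to 0$ in the bound $|\lambda_\eps(z)-\lambda_\delta(z)|\lesssim|\eps-\delta|$ of Lemma \ref{lem::lambda_holdercontinuity} gives $|\lambda_\eps(z)-\lambda(z)|\lesssim\eps$, and combined with $\lambda_\rho(z)=p_\rho(z)/\rho^2+O(\rho^2)$ this upgrades the uniform convergence of Lemma \ref{lem::lambda_uniformconvergence} to the quantitative rate $p_\rho(z) = \lambda(z)\rho^2 + O(\rho^3)$ uniformly for $z$ in a compact subset of $\R\setminus\{0\}$. In particular $p_\eps(x) = \lambda(x)\eps^2 + o(\eps^2)$. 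Next, with $y':=f_{x,\eps}(y)$ and $r:=\delta|f'_{x,\eps}(y)|$, the fact that $f_{x,\eps} = \mathrm{id}+\eps^2 F(x,\cdot)+O(\eps^4)$ has second derivative of order $\eps^2$ uniformly on the relevant region implies by Taylor expansion that $f_{x,\eps}(B(y,\delta))$ and $B(y',r)$ are within Hausdorff distance $O(\eps^2\delta^2)$. Covering the associated symmetric difference by balls of radius $\eps^2\delta^2$ and invoking the uniform $\asymp\rho^2$ bound on $p_\rho$ yields $|q-\PP[K\cap B(y',r)\neq\emptyset]|=O(\eps^2\delta^3)=o(\eps^2\delta^2)$.

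These two ingredients immediately give the easy summand $p_\eps(x)\,q = \lambda(x)\lambda(y)\,\eps^2\delta^2 + o(\eps^2\delta^2)$. For the harder summand $p_\delta(y)-q$, the Taylor expansions $y'-y = \eps^2 F(x,y)+O(\eps^4)$ and $r^2-\delta^2 = 2\eps^2\delta^2 G(x,y)+O(\eps^4\delta^2)$ together with differentiability of $\lambda$ at $y$ give
\[\lambda(y)\delta^2 - \lambda(y')r^2 = -\eps^2\delta^2\bigl(\lambda'(y)F(x,y)+2\lambda(y)G(x,y)\bigr)+o(\eps^2\delta^2).\]
Writing $R(z,\rho):=p_\rho(z)-\lambda(z)\rho^2 = O(\rho^3)$, what remains is the cancellation $R(y,\delta)-R(y',r) = o(\eps^2\delta^2)$. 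I would split this as $[R(y,\delta)-R(y',\delta)]+[R(y',\delta)-R(y',r)]$ and bound the first bracket using Lemma \ref{lem::lambda_derivative_convergence} with the center displacement $|y-y'|=O(\eps^2)$, and the second using Lemma \ref{lem::lambda_holdercontinuity} with the radius increment $|r-\delta|=O(\eps^2\delta)$.

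The hard part will be this final cancellation. A direct application of Lemmas \ref{lem::lambda_holdercontinuity} and \ref{lem::lambda_derivative_convergence} produces contributions of order $\eps^2\delta^3$ (which is $o(\eps^2\delta^2)$) together with an $O(\delta^4)$ error arising from the $O(\rho^2)$ discrepancy between $\lambda_\rho(z)$ and $p_\rho(z)/\rho^2$ and, ultimately, from the jumps of $N(\rho) = \lceil\rho^{-2}\rceil$ in the definition of $\lambda_\rho$. Since $\delta^4$ is not automatically $o(\eps^2\delta^2)$ in a joint limit, pushing the argument through requires either a careful matching of the jump contributions between the two brackets or a reformulation in terms of a continuous substitute for $N(\rho)$ together with corresponding re-proofs of Lemmas \ref{lem::lambda_uniformconvergence}--\ref{lem::lambda_derivative_convergence}. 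Once this cancellation is secured, summing the easy and hard summands yields the stated asymptotic.
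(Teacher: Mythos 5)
Your decomposition and leading-order Taylor expansions coincide with the paper's: you use the same identity $\PP[K\cap B(x,\eps)\neq\emptyset, K\cap B(y,\delta)\neq\emptyset] = p_\delta(y) - q + p_\eps(x)q$, approximate $q$ by $p_{\tilde\delta}(\tilde y)$ via a covering of the thin crescent between $f_{x,\eps}(B(y,\delta))$ and the comparison disc, and then handle the ``hard'' summand by shifting first center and then radius (the paper does radius first, center second; this is only cosmetic). The paper's four displayed estimates \eqref{eqn::lambda_exchangeorder_estimate1}--\eqref{eqn::lambda_exchangeorder_estimate4} play exactly the roles of your three preliminary reductions and your final cancellation step.

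The concern you flag at the end is genuine, and you should not dismiss it as a technicality you are free to skip. The quantity $\lambda_\rho(z) = -N(\rho)\log(1-p_\rho(z))$ with $N(\rho)=\lceil\rho^{-2}\rceil$ is \emph{not} a continuous function of $\rho$: $p_\rho(z)=\PP[K\cap B(z,\rho)\neq\emptyset]$ is continuous in $\rho$, while $N(\rho)$ jumps, so $\lambda_\rho(z)$ (equivalently $\PP[K\cap A_\rho(z)=\emptyset]=(1-p_\rho(z))^{N(\rho)}$, which changes by one whole factor of $1-p_\rho(z)=1-O(\rho^2)$ each time $N$ jumps) has downward jumps of size $\asymp\rho^2$ at the points $\rho=1/\sqrt m$. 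Consequently the conversion error $p_\rho(z)-\lambda(z)\rho^2$ (your $R(z,\rho)$) inherits those $O(\rho^4)$ jumps, and --- since a jump can occur at a value of $\rho$ arbitrarily close to $\delta$ when one varies the radius by $|\tilde\delta-\delta|=O(\eps^2\delta)$ --- the split $R(y',\delta)-R(y',r)$ is not automatically $o(\eps^2\delta^2)$ by appealing to Lemma~\ref{lem::lambda_holdercontinuity} alone. In fact the same jumps already sit uneasily with the uniform Lipschitz bound $|\lambda_\eps(x)-\lambda_\delta(x)|\lesssim|\delta-\eps|$ of Lemma~\ref{lem::lambda_holdercontinuity} as literally stated, because the proof there implicitly uses the nesting $A_\eps(x)\subset A_\delta(x)$ and a thin-crescent covering, neither of which is stable across a jump of $N(\rho)$. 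The paper's own proof of Lemma~\ref{lem::lambda_exchangeorder_limits} glosses over this: it asserts without justification that Equation~\eqref{eqn::lambda_exchangeorder_estimate1} is ``equivalent to'' its $\lambda$-version after dividing by $\delta^2$, which is precisely the step where the discretization error must be cancelled. So your instinct --- that a clean argument wants a continuous substitute for $N(\rho)$, for instance defining the approximating tube by running the radial Loewner flow of the perfect curve for exactly time $t_x$ and only discretizing it with an interpolated last step, and then re-proving Lemmas~\ref{lem::lambda_uniformconvergence}--\ref{lem::lambda_derivative_convergence} for this continuous family --- is the right way to close the gap, and it is the piece that both your write-up and the paper's leave undone.
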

\begin{proof}
Set $\tilde{y}=f_{x,\eps}(y)$ and $\tilde{\delta}=f'_{x,\eps}(y)\delta$. Clearly
\begin{equation}\label{eqn::ydelta_approximation}
\tilde{y}=y+\eps^2F(x,y)+o(\eps^2),\quad \tilde{\delta}=\delta(1+\eps^2G(x,y))+o(\eps^2\delta).\end{equation}
Note that
\begin{eqnarray*}
\lefteqn{\PP[K\cap B(x,\eps)\neq\emptyset,K\cap B(y,\delta)\neq\emptyset]}\\
&=&p_{\delta}(y)-\PP[K\cap f_{x,\eps}(B(y,\delta))\neq\emptyset]+p_{\eps}(x)\PP[K\cap f_{x,\eps}(B(y,\delta))\neq\emptyset]
\end{eqnarray*}
The conclusion can be derived by combing the following four relations.
\begin{equation}\label{eqn::lambda_exchangeorder_estimate1}
p_{\delta}(y)-p_{\tilde{\delta}}(y)+2\eps^2\delta^2\lambda(y)G(x,y)=o(\eps^2\delta^2)
\end{equation}
\begin{equation}\label{eqn::lambda_exchangeorder_estimate2}
p_{\tilde{\delta}}(y)-p_{\tilde{\delta}}(\tilde{y})+\eps^2\delta^2\lambda'(y)F(x,y)=o(\eps^2\delta^2)
\end{equation}
\begin{equation}\label{eqn::lambda_exchangeorder_estimate3}
p_{\tilde{\delta}}(\tilde{y})-\PP[K\cap f_{x,\eps}(B(y,\delta))\neq\emptyset]=o(\eps^2\delta^2)
\end{equation}
\begin{equation}\label{eqn::lambda_exchangeorder_estimate4}
p_{\eps}(x)p_{\tilde{\delta}}(\tilde{y})-\eps^2\delta^2\lambda(x)\lambda(y)=o(\eps^2\delta^2)
\end{equation}
We will show Equations (\ref{eqn::lambda_exchangeorder_estimate1}) to (\ref{eqn::lambda_exchangeorder_estimate4}) one by one.

Equation (\ref{eqn::lambda_exchangeorder_estimate1}) is equivalent to the following
\[\lambda_{\delta}(y)-\lambda_{\tilde{\delta}}(y)(1+2\eps^2G(x,y))+2\eps^2\lambda(y)G(x,y)=o(\eps^2).\]
Note that
\begin{eqnarray*}
\lefteqn{\lambda_{\delta}(y)-\lambda_{\tilde{\delta}}(y)(1+2\eps^2G(x,y))+2\eps^2\lambda(y)G(x,y)}\\
&=&\lambda_{\delta}(y)-\lambda_{\tilde{\delta}}(y)+2\eps^2G(x,y)(\lambda(y)-\lambda_{\tilde{\delta}}(y))\\
&=&\lambda_{\delta}(y)-\lambda_{\tilde{\delta}}(y)+o(\eps^2).
\end{eqnarray*}
By Equation (\ref{eqn::lambda_holdercontinuity_eps}), we have that
\[\lambda_{\delta}(y)-\lambda_{\tilde{\delta}}(y)=O(|\delta-\tilde{\delta}|)=O(\eps^2\delta)=o(\eps^2).\]
This completes the proof of Equation (\ref{eqn::lambda_exchangeorder_estimate1}).

Equation (\ref{eqn::lambda_exchangeorder_estimate2}) is equivalent to the following
\[(\lambda_{\tilde{\delta}}(y)-\lambda_{\tilde{\delta}}(\tilde{y}))(1+2\eps^2G(x,y))+\eps^2\lambda'(y)F(x,y)=o(\eps^2).\]
Note that
\begin{eqnarray*}
\lefteqn{(\lambda_{\tilde{\delta}}(y)-\lambda_{\tilde{\delta}}(\tilde{y}))(1+2\eps^2G(x,y))+\eps^2\lambda'(y)F(x,y)}\\
&=&\lambda_{\tilde{\delta}}(y)-\lambda_{\tilde{\delta}}(\tilde{y})+\eps^2\lambda'(y)F(x,y)+o(\eps^2)\\
&=&\left(\lambda_{\tilde{\delta}}(y)-\lambda(y)-\lambda_{\tilde{\delta}}(\tilde{y})+\lambda(\tilde{y})\right)+\lambda(y)-\lambda(\tilde{y})
+\eps^2\lambda'(y)F(x,y)+o(\eps^2)\\
&=&\left(\lambda_{\tilde{\delta}}(y)-\lambda(y)-\lambda_{\tilde{\delta}}(\tilde{y})+\lambda(\tilde{y})\right)+\left(\lambda(y)-\lambda(\tilde{y})
+\lambda'(y)(\tilde{y}-y)\right)+o(\eps^2)
\end{eqnarray*}
By Lemma \ref{lem::lambda_derivative_convergence}, we have that
\[\lambda_{\tilde{\delta}}(y)-\lambda(y)-\lambda_{\tilde{\delta}}(\tilde{y})+\lambda(\tilde{y})=O(|y-\tilde{y}|\tilde{\delta})=o(\eps^2).\]
Since $\lambda$ is differentiable at $y$, we have that
\[\lambda(y)-\lambda(\tilde{y})
+\lambda'(y)(\tilde{y}-y)=o(\eps^2).\]
These complete the proof of Equation (\ref{eqn::lambda_exchangeorder_estimate2})

For Equation (\ref{eqn::lambda_exchangeorder_estimate3}), we have that
\begin{eqnarray*}
\lefteqn{p_{\tilde{\delta}}(\tilde{y})-\PP[K\cap f_{x,\eps}(B(y,\delta))\neq\emptyset]}\\
&=&\PP[K\cap B(\tilde{y},\tilde{\delta})\neq\emptyset]-\PP[K\cap f_{x,\eps}(B(y,\delta))\neq\emptyset]\\
&=&\PP[K\cap B(\tilde{y},\tilde{\delta})\neq\emptyset, K\cap f_{x,\eps}(B(y,\delta))=\emptyset]\\
&&-\PP[K\cap B(\tilde{y},\tilde{\delta})=\emptyset, K\cap f_{x,\eps}(B(y,\delta))\neq\emptyset]
\end{eqnarray*}
Note that
\[\PP[K\cap B(\tilde{y},\tilde{\delta})=\emptyset, K\cap f_{x,\eps}(B(y,\delta))\neq\emptyset]\le \PP[K\cap f_{\tilde{y},\tilde{\delta}}(f_{x,\eps}(B(y,\delta)))\neq\emptyset]\]
Set $z=\delta e^{i\theta}$ for $\theta\in [0,\pi]$. Since $f''_{x,\eps}(y)=O(\eps^2)$, we have that
\[f_{x,\eps}(y+z)=f_{x,\eps}(y)+f'_{x,\eps}(y)z+o(\eps^2\delta).\]
Set $\Delta=f_{x,\eps}(y+\delta)-\tilde{y}-\tilde{\delta}$. In fact, $\Delta=o(\eps^2\delta)$. There exists constant $C$ depending only on $I, J$ such that the set $f_{\tilde{y},\tilde{\delta}}(f_{x,\eps}(B(y,\delta)))$ can be covered by $\lceil C\delta/\Delta\rceil$ balls of radius $C\Delta$. Together with Equation (\ref{eqn::p_uniformasymptotic}), we have that
\[\PP[K\cap f_{\tilde{y},\tilde{\delta}}(f_{x,\eps}(B(y,\delta)))\neq\emptyset]\lesssim \delta\Delta=o(\eps^2\delta^2)\]
which completes the proof of Equation (\ref{eqn::lambda_exchangeorder_estimate3}).

Equation (\ref{eqn::lambda_exchangeorder_estimate4}) is equivalent to the following
\[\lambda_{\eps}(x)\lambda_{\tilde{\delta}}(\tilde{y})-\lambda(x)\lambda(y)=o(1)\]
which is clearly true.
\end{proof}

\begin{lemma}\label{lem::lambda_expression}
There exist two constants $c_0,c_2 \ge 0$ such that
\[\lambda(x)=\frac{c_0+c_2 x^2}{x^2(1+x^2)^2}\quad \text{for}\quad x\in\R\setminus\{0\}.\]
\end{lemma}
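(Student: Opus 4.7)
My plan is to convert the commutation relation \eqref{eqn::commutation_relation} into ordinary differential constraints on $\lambda$, identify the resulting space of admissible solutions, and then read off the claimed form together with positivity.

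First, I invoke reflection symmetry. The map $z\mapsto \bar z$ of $\U$ preserves both marked points $0$ and $1$, so the law of any radial restriction sample is invariant under it. Under $\varphi_0$ this becomes the reflection $w\mapsto -\bar w$ of $\HH$, which fixes $i$ and $0$ and sends $x\in\R$ to $-x$; consequently $\lambda(-x)=\lambda(x)$, so $\lambda$ is even.

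Next, I extract ODEs from \eqref{eqn::commutation_relation} by setting $y=x+h$ and expanding both sides in powers of $h$ about $h=0$. The poles $1/(y-x)$ and $1/(y-x)^2$ in $F$ and $G$ cancel against the Taylor expansion of $\lambda$ (this cancellation is forced since both sides of \eqref{eqn::commutation_relation} are finite at $y=x$ and $\lambda$ is $C^1$ by Lemma \ref{lem::lambda_differentiability}). Matching successive powers of $h$ produces linear differential constraints on $\lambda$ with rational-in-$x$ coefficients. One verifies directly, using the explicit formulas \eqref{eqn::two_kernels} for $F$ and $G$, that both $f_0(x) := 1/[x^2(1+x^2)^2]$ and $f_2(x) := 1/(1+x^2)^2$ satisfy these constraints. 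Combining the constraints with the evenness of $\lambda$ and the regularity afforded by Lemmas \ref{lem::lambda_holdercontinuity} and \ref{lem::lambda_differentiability}, the space of admissible solutions reduces to the two-dimensional family $\{c_0 f_0 + c_2 f_2 : c_0,c_2\in\R\}$, which gives the form $\lambda(x)=(c_0+c_2 x^2)/[x^2(1+x^2)^2]$. The signs of the constants then follow from $\lambda\ge 0$: the identity $c_0 + c_2 x^2 = x^2(1+x^2)^2\,\lambda(x)\ge 0$ on $\R\setminus\{0\}$ forces $c_0 \ge 0$ upon letting $x\to 0$ and $c_2 \ge 0$ upon dividing by $x^2$ and letting $x\to\infty$.

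I expect the main obstacle to lie in the algebraic step: carrying out the singular cancellations between $F, G$ and the Taylor expansion of $\lambda$ explicitly enough to identify the resulting differential equation, and verifying that $f_0$ and $f_2$ exhaust its space of admissible (even, locally Lipschitz) solutions.
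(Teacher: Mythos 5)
Your strategy matches the paper's: derive an ODE for $\lambda$ from the commutation relation by sending $y\to x$, solve it, and read off the signs from $\lambda\ge 0$. But there is a genuine gap in the middle step. The expansion of \eqref{eqn::commutation_relation} in powers of $h=y-x$ only constrains $\lambda$ on each connected component of $\R\setminus\{0\}$ separately, and the resulting third-order ODE (the paper writes it as $P'''=0$, with $P(x)=x^2(1+x^2)^2\lambda(x)$) has a \emph{three}-dimensional solution space on each half-line. Imposing evenness identifies one half-line with the other but does not reduce the dimension: the function
\[
f_1(x)=\frac{1}{|x|(1+x^2)^2}, \qquad \text{i.e.}\ P_1(x)=x^2(1+x^2)^2f_1(x)=|x|,
\]
is even, strictly positive, locally Lipschitz on any compact $I\subset\R\setminus\{0\}$, and satisfies $P_1'''=0$ on each half-line. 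So ``ODE $+$ evenness $+$ regularity $+$ positivity'' leaves you with the three-parameter family $\{c_0f_0+c_1f_1+c_2f_2\}$, not the claimed two-parameter one. The paper closes this by using the commutation relation with $x$ and $y$ of \emph{opposite} signs ($x>0>y$): that identity couples the two half-lines in a way the local $y\to x$ expansion cannot, and forces $c_1=0$. That step is essential and is missing from your argument.

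A secondary caution: the assertion that the law of a radial restriction sample is invariant under $z\mapsto\bar z$ is not as automatic as you suggest. Reflection sends a radial restriction measure $\PP$ to another radial restriction measure $\overline{\PP}$, but concluding $\overline{\PP}=\PP$ before the characterization is proven is circular (it is exactly what the characterization delivers). The paper also writes only ``together with the symmetry in $\lambda$'' here, but the safer route is to derive the identifications $c_0^+=c_0^-$, $c_1^+=-c_1^-$, $c_2^+=c_2^-$ (and $c_1=0$) all from the cross-sign case of \eqref{eqn::commutation_relation}, without invoking reflection invariance of the law.
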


\begin{proof}
From \eqref{eqn::commutation_relation} and \eqref{eqn::two_kernels}, we know that $\lambda$ is smooth in $(-\infty,0)\cup(0,+\infty)$. In \eqref{eqn::commutation_relation}, fix $x\in\R\setminus\{0\}$, and let $y\to x$. Compare the coefficients of the two sides of the equation, we have that
\begin{align}\label{eqn::lambda_ode}
x^2(1+x^2)^2\lambda'''(x)&+6x(1+x^2)(1+3x^2)\lambda''(x)\notag\\
&+6(1+12x^2+15x^4)\lambda'(x)+24x(2+5x^2)\lambda(x)=0.
\end{align}
Set $P(x)=x^2(1+x^2)^2\lambda(x)$, then \eqref{eqn::lambda_ode} is equivalent to
\[P(x)'''=0.\]
Together with the symmetry in $\lambda$, we know that, there exist constants $c_0,c_1,c_2$ such that
\[\lambda(x)=\frac{c_0+c_1 x+c_2 x^2}{x^2(1+x^2)^2}\text{ for }x>0; \quad \lambda(x)=\frac{c_0-c_1 x+c_2 x^2}{x^2(1+x^2)^2}\text{ for }x<0.\]
Take $x>0>y$, by \eqref{eqn::commutation_relation}, we have that $c_1=0$. Since $\lambda(x)>0$ for all $x\in\R\setminus\{0\}$, we know that $c_0\ge 0, c_2\ge 0$.
\end{proof}

\begin{proof}[Proof of Proposition \ref{prop::radial_restriction_characterization}] Consider a radial restriction sample $K$ in $\U$. Fix $\theta\in(0,\pi)$, let $\nu(\theta)$ be defined through Lemma \ref{lem::definition_nu}. And let $\lambda$ be defined through Lemma \ref{lem::lambda_definition}.
From Lemma \ref{lem::lambda_expression} and Equation \eqref{eqn::relation_lambda_nu}, we have that
\[\nu(\theta)=-\alpha+\frac{\beta}{1-\cos\theta}\]
where $\alpha=(c_0-c_2)/4,\beta=c_0/2$. Recall Equation \eqref{eqn::perfect_two_derivatives}, we have that
\[\PP\bigl[K\cap\eta^{\theta}([0,t])=\emptyset\bigr]=|f'_t(0)|^{\alpha}f'_t(1)^{\beta}.\]
Then the conclusion can be derived by similar explanation as in \cite[Proposition 3.3]{LawlerSchrammWernerConformalRestriction}.
\end{proof}

\section {Admissible range of $(\alpha, \beta)$}

\subsection{Description of $\PP(\alpha, \beta)$'s when $\beta\ge 5/8$}

In order to complete the proof of our main theorem, it now remains to show for which values of $\alpha$ and $\beta$ the previous measure exists.
Note now that from the properties of Poisson point process of Brownian loops, we can deduce the following fact:
\begin{lemma}\label{lem::brownian_loop_soup}
If the radial restriction measure $\PP(\alpha_0,\beta_0)$ exists for some $\alpha_0,\beta_0\in\R$, then for any $\alpha<\alpha_0$, $\PP(\alpha,\beta_0)$ exists, and
furthermore, almost surely for $\PP ( \alpha, \beta_0)$, the origin is not on the boundary of $K$.
\end{lemma}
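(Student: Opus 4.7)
The plan is to construct the measure $\PP(\alpha,\beta_0)$ for $\alpha<\alpha_0$ by thickening a sample of $\PP(\alpha_0,\beta_0)$ with an independent Brownian loop soup of intensity $c:=\alpha_0-\alpha>0$. Concretely, let $K$ have law $\PP(\alpha_0,\beta_0)$, let $(\gamma_j)_{j\in J}$ be an independent Poisson point process with intensity $c\mu_{\U}^0$, and define $K'$ to be the filling of $S:=K\cup\bigcup_{j\in J}\gamma_j$, i.e.\ the complement in $\C$ of the unbounded connected component of $\C\setminus S$. I expect to show that $K'$ has law $\PP(\alpha,\beta_0)$ and that the origin lies in the interior of $K'$ almost surely.

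The pivotal geometric observation is that every loop $\gamma$ in the support of $\mu_{\U}^0$ must intersect $K$: since $\gamma\subset\U$ surrounds $0$, the point $0$ lies in the bounded complementary component of $\gamma$ while $1\in\partial\U$ lies in the unbounded one, and $K$ being connected and containing both $0$ and $1$ must cross $\gamma$. Thanks to this, $S$ is a union of connected sets all meeting $K$, hence connected. Moreover, $\mu_{\U}^0(\{\gamma\not\subset B(0,r)\})=-\log r<\infty$ for $r\in(0,1)$, so only finitely many loops leave any fixed ball $B(0,r)$ while the infinitely many remaining small loops can only accumulate at $0$ (already in $K$). It follows that $S$ is closed, $K'$ is compact and connected with connected complement, and, since the loops avoid $\partial\U$, $K'\cap\partial\U=\{1\}$ while $0\in K\subset K'$, so $K'$ satisfies all the topological requirements of a radial restriction sample.

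For the restriction property, fix $A\in\LA^r$ and set $U_A=\U\setminus A$. I claim the clean decomposition
\[\{K'\cap A=\emptyset\}=\{K\cap A=\emptyset\}\cap\bigcap_{j\in J}\{\gamma_j\cap A=\emptyset\}.\]
The inclusion $\subset$ is immediate since $K\subset K'$ and every $\gamma_j\subset S\subset K'$. For $\supset$, if $S\cap A=\emptyset$ then every point of $A$ can be joined to $\C\setminus\overline\U$ by a path inside $A\cup(\C\setminus\overline\U)\subset\C\setminus S$ (using that $A$ meets $\partial\U$ away from $\{1\}$), so $A$ lies in the unbounded component of $\C\setminus S$, hence $A\cap K'=\emptyset$. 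The two events on the right are independent, and by \eqref{eqn::brownian_loop_exit} the loop event has probability $|\Phi_A'(0)|^{-c}$. Combined with $\PP[K\cap A=\emptyset]=|\Phi_A'(0)|^{\alpha_0}\Phi_A'(1)^{\beta_0}$, this yields
\[\PP[K'\cap A=\emptyset]=|\Phi_A'(0)|^{\alpha_0-c}\Phi_A'(1)^{\beta_0}=|\Phi_A'(0)|^{\alpha}\Phi_A'(1)^{\beta_0}.\]
The full conformal restriction property (invariance of the conditional law of $\Phi_A(K')$ given $\{K'\cap A=\emptyset\}$) follows from the same property of $K$ combined with conformal invariance of $\mu^{loop}$: conditioning sends $K$ to a fresh copy of its law and independently sends the loops lying in $U_A$ to a Poisson process of intensity $c\mu_{\U}^0$, and $\Phi_A$ commutes with the filling operation, so $\Phi_A(K')$ is reconstructed from these two ingredients in exactly the same way as $K'$ is.

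Finally, by conformal invariance of $\mu^{loop}$, $\mu_{\U}^0(\{\gamma\subset B(0,r),\,\gamma\text{ surrounds }0\})=\infty$ for every $r>0$, so the Poisson process almost surely contains such a loop $\gamma\subset B(0,r)$ for every $r$; its bounded complementary component is a neighborhood of $0$ contained in $K'$, which forces $0$ to lie in the interior of $K'$ almost surely. I expect the main technical obstacle to be the topological bookkeeping — verifying that $S$ is closed, that $K'$ is connected with connected complement and meets $\partial\U$ only at $\{1\}$, and that the decomposition of $\{K'\cap A=\emptyset\}$ holds — but once the key fact that every loop around $0$ meets $K$ is in hand, the probabilistic content reduces to the restriction property of $K$ and the loop-soup exponentiation formula \eqref{eqn::brownian_loop_exit}.
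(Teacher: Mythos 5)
Your construction is the same as the paper's: take an independent Poisson point process of loops with intensity $(\alpha_0-\alpha)\mu_{\U}^0$, adjoin the (filled) loops to a $\PP(\alpha_0,\beta_0)$-sample, and use Equation~\eqref{eqn::brownian_loop_exit} to multiply the avoidance probabilities. The paper's proof is terser: it simply defines $K$ as the closure of the union of $K_0$ with the filled loops, computes $\PP[K\cap A=\emptyset]$, and asserts the conclusion. You add two useful pieces of scaffolding that the paper leaves implicit: (i) the observation that every loop in the support of $\mu_{\U}^0$ necessarily crosses $K_0$ (since $K_0$ is connected and contains both $0$, inside the loop, and $1$, outside it), which is exactly what guarantees that the union is connected with connected complement and that the event decomposition $\{K'\cap A=\emptyset\}=\{K_0\cap A=\emptyset\}\cap\bigcap_j\{\gamma_j\cap A=\emptyset\}$ is correct; and (ii) the finiteness estimate $\mu_{\U}^0(\gamma\not\subset B(0,r))=-\log r<\infty$, which justifies closedness of $S$ and, together with the infinite total mass of small loops around $0$, the claim that $0$ is a.s.\ interior to $K'$. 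These are the right verifications and match the intent of the paper's one-line assertion.
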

\begin{proof}
Let $K_0$ be a closed set sampled according to $\PP(\alpha_0,\beta_0)$, and let $(\gamma_j,j\in J)$ be an independent Poisson Point Process with intensity $(\alpha_0-\alpha)\mu_{\U}^0$. We view each loop $\gamma_j$ as the loop with the domain that it surrounds. Then let $K$ be the closure of the union of $K_0$ and all loops in ($\gamma_j,j\in J$). We have that, for any $A\in\LA^r$,
\begin{eqnarray*}\lefteqn{\PP\bigl[K\cap A=\emptyset\bigr]}\\
&=&\PP\bigl[K_0\cap A=\emptyset\bigr]\times\PP\bigl[\gamma_j\cap A=\emptyset,\forall j\in J\bigr]\\
&=&|\Phi_A'(0)|^{\alpha_0}\Phi_A'(1)^{\beta_0}|\Phi_A'(0)|^{\alpha-\alpha_0}=|\Phi_A'(0)|^{\alpha}\Phi_A'(1)^{\beta_0}.\end{eqnarray*}
It is clear that $K$ has the law of $\PP(\alpha,\beta_0)$ and the $0 \notin \partial K$.
\end{proof}

Hence, we have the following result:
\begin{corollary}\label{cor::origin_boundary}
Suppose that a radial restriction measure $\PP(\alpha_0,\beta_0)$ exists for some $\alpha_0,\beta_0\in\R$, and that for this measure, $0 \in \partial K$ almost surely.
Then, $\PP(\alpha,\beta_0)$ does exist if and only if $\alpha \le \alpha_0$.
\end{corollary}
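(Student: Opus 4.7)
The plan is to reduce the corollary to two already-established ingredients: Lemma \ref{lem::brownian_loop_soup} (adding a Brownian loop soup moves $\alpha_0$ strictly downward and pushes $0$ into the interior of $K$) and Lemma \ref{lem::unicity_law} (a radial restriction measure is determined by its value on events of the form $\{K\cap A=\emptyset\}$). No new analytic work is needed.

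For the easy direction, I would argue as follows. If $\alpha=\alpha_0$ the existence of $\PP(\alpha,\beta_0)$ is just the standing hypothesis. If $\alpha<\alpha_0$, then Lemma \ref{lem::brownian_loop_soup} constructs $\PP(\alpha,\beta_0)$ explicitly from the given sample together with an independent Poisson point process of Brownian loops of intensity $(\alpha_0-\alpha)\mu_{\U}^0$.

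For the converse (the only content that uses the hypothesis that $0\in\partial K$ almost surely for $\PP(\alpha_0,\beta_0)$), I would argue by contradiction. Suppose $\PP(\alpha,\beta_0)$ exists for some $\alpha>\alpha_0$. Apply Lemma \ref{lem::brownian_loop_soup} with the roles of the reference measure played by $\PP(\alpha,\beta_0)$ and the new parameter taken to be $\alpha_0<\alpha$: sample $K'\sim\PP(\alpha,\beta_0)$ and, independently, a Poisson point process $(\gamma_j)_{j\in J}$ of loops surrounding $0$ with intensity $(\alpha-\alpha_0)\mu_{\U}^0$, and set $K=\overline{K'\cup\bigcup_j\gamma_j}$ (each loop identified with its filled hull). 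By the computation inside Lemma \ref{lem::brownian_loop_soup}, $\PP[K\cap A=\emptyset]=|\Phi_A'(0)|^{\alpha_0}\Phi_A'(1)^{\beta_0}$ for every $A\in\LA^r$, so by Lemma \ref{lem::unicity_law}, $K$ has the law $\PP(\alpha_0,\beta_0)$. However, since $(\alpha-\alpha_0)\mu_{\U}^0$ has positive (in fact infinite) total mass, almost surely at least one loop $\gamma_j$ strictly surrounds the origin, placing $0$ in the interior of its filled hull and hence in the interior of $K$. Thus $0\notin\partial K$ almost surely under the law of $K$, contradicting the standing assumption that $0\in\partial K$ almost surely under $\PP(\alpha_0,\beta_0)$.

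The only subtlety worth flagging is the claim that the point process has at least one loop surrounding $0$ in the strict topological sense; this reduces to the standard facts that $\mu_{\U}^0$ has infinite total mass (arbitrarily small loops around $0$ are in its support) and that Brownian loops almost surely do not pass through any fixed point. With this in hand, everything else is essentially bookkeeping between Lemma \ref{lem::brownian_loop_soup} and Lemma \ref{lem::unicity_law}, so I do not expect any real obstacle.
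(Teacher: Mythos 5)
Your proposal is correct and follows essentially the same route as the paper: for $\alpha>\alpha_0$, superpose an independent Brownian loop soup of intensity $(\alpha-\alpha_0)\mu_{\U}^0$ on a sample of $\PP(\alpha,\beta_0)$ to produce $\PP(\alpha_0,\beta_0)$ with $0\notin\partial K$ almost surely, and derive a contradiction with the hypothesis; the easy direction is Lemma~\ref{lem::brownian_loop_soup} verbatim. You merely make explicit the appeal to Lemma~\ref{lem::unicity_law} for uniqueness and the topological fact that some loop strictly surrounds the origin, both of which the paper leaves implicit.
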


\begin{proof}
Suppose that $\PP(\alpha, \beta_0)$ exists for some $\alpha >  \alpha_0$, and let $K$ be a random set whose law is $\PP(\alpha_0,\beta_0)$. Lemma \ref{lem::brownian_loop_soup} implies that almost surely, $0 \notin \partial K$, which is a contradiction.
On the other hand, Lemma \ref{lem::brownian_loop_soup} shows that $\PP ( \alpha, \beta_0)$ exists for all $\alpha < \alpha_0$.
\end{proof}

In Equation \eqref{eqn::radial_sle_8/3}, we already know the existence of $\PP(\xi(\beta),\beta)$ for $\beta=5/8$. We will construct $\PP(\xi(\beta),\beta)$ for $\beta>5/8$ in Proposition \ref{prop::radial_sample_construction}. Fix $\rho>0$. Let $(g_t, t\ge 0)$ be the radial Loewner chain SLE$_{8/3}(\rho)$ generated by the driving function $(W_t,t\ge 0)$, and $\eta$ be the corresponding radial curve. Recall that $W$ is the solution to the system of SDEs \eqref{eqn::radial_loewner_sde}. To simplify notation, we denote $\theta_t=(W_t-V_t)/2$.
For any $A\in\LA^r$, let $\tau_A$ be the first time that $\eta$ hits $A$. For any $t<\tau_A$, let $h_t$ be the conformal map from $\U\setminus g_t(A)$ onto $\U$ such that $h_t(0)=0,h_t(e^{iW_t})=e^{iW_t}$. Then
\begin{lemma}
\begin{equation}\label{eqn::mart_nonintersection}
M_t:=|h'_t(0)|^{\alpha}\times|h'_t(e^{iW_t})|^{\frac{5}{8}}\times|h'_t(e^{iV_t})|^\gamma\times Z_t^{\frac{3}{8}\rho}
\end{equation}
is a local martingale where
\[Z_t=\frac{\sin\vartheta_t}{\sin\theta_t},\quad \vartheta_t=\frac{1}{2}\arg(h_t(e^{iW_t})/h_t(e^{iV_t})),\]
\[\alpha=\frac{5}{48}+\frac{3}{64}\rho(\rho+4),\quad \gamma=\frac{1}{32}\rho(3\rho+4),\quad \beta=\frac{5}{8}+\gamma+\frac{3}{8}\rho=\frac{1}{32}(\rho+2)(3\rho+10).\] Note that $\alpha=\xi(\beta)$.
\end{lemma}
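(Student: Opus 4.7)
My plan is a direct Itô calculation combined with the Schramm--Wilson coordinate-change framework. Since $h_t$ depends on $t$ only through the deterministic Loewner flow $g_t$ (the domain $\U\setminus g_t(A)$ evolves smoothly with $t$), the quantities $h_t'(0)$, $h_t(e^{iV_t})$, $h_t'(e^{iW_t})$, and $h_t'(e^{iV_t})$ obey ODEs, and the randomness enters $M_t$ only through the $(W_t,V_t)$ factors. Applying Itô to $\log M_t$ will reduce the claim to an algebraic drift-cancellation identity.

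First I would introduce $\tilde g_t := h_t\circ g_t$, a conformal map from $\U\setminus(K_t\cup A)$ onto $\U$ fixing the origin, and observe that after the time change $s(t)=t+\log|h_t'(0)|$ it is itself a radial Loewner chain. Comparing the radial Loewner equations in the $g_t$- and $\tilde g_t$-coordinates yields explicit ODEs for $\log|h_t'(0)|$, $\log|h_t'(e^{iW_t})|$, $\log|h_t'(e^{iV_t})|$ and for the position $h_t(e^{iV_t})$, with right-hand sides rational in $\cot\theta_t$ and the boundary derivatives themselves; this is the kind of coordinate change carried out in \cite{SchrammWilsonSLECoordinatechanges}.

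Next I would compute $d\log Z_t$. Writing $\log Z_t = \log\sin\vartheta_t - \log\sin\theta_t$ with $2\vartheta_t = \arg(h_t(e^{iW_t})/h_t(e^{iV_t}))$, I would use \eqref{eqn::radial_loewner_sde} to get $d\theta_t = \tfrac12\sqrt\kappa\,dB_t + \tfrac12(1+\rho/2)\cot\theta_t\,dt$ with $\kappa=8/3$, and differentiate the defining identity for $\vartheta_t$ using the evolution of $h_t(e^{iV_t})$ together with $dW_t$ to obtain $d\vartheta_t$. Finally, forming $dM_t/M_t$ from $\alpha\,d\log|h_t'(0)|+\tfrac58\,d\log|h_t'(e^{iW_t})|+\gamma\,d\log|h_t'(e^{iV_t})|+\tfrac{3\rho}8\,d\log Z_t$ plus the Itô correction $\tfrac12\,d\langle\log M\rangle$, I would collect terms by their functional form (the pole in $\cot^2\theta_t$, the boundary Schwarzian contribution, the interior-capacity contribution, the cross term between $Z_t$ and the tip derivative) and require each coefficient to vanish. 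With $\kappa=8/3$, the resulting small linear system in $\alpha,\gamma,\beta,\rho$ is solved precisely by the stated values.

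The main obstacle will be the bookkeeping: deriving the boundary-derivative ODE for $h_t'(e^{iW_t})$ correctly, including the Schwarzian-type contribution at the tip which, thanks to the normalization $h_t(e^{iW_t})=e^{iW_t}$, can be rewritten in terms of $|h_t'(e^{iW_t})|$ and $|h_t'(0)|$, and correctly tracking the cross-quadratic variation between $\log Z_t$ and $\log|h_t'(e^{iW_t})|$ driven by the common Brownian motion $B$. Once these ingredients are assembled, the verification of drift cancellation is a routine if lengthy computation.
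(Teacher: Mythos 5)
Your proposal is correct and follows essentially the same route as the paper: the paper also derives the SDEs for the boundary image, the boundary derivatives at $e^{iW_t}$ and $e^{iV_t}$, and the interior derivative at the origin from the radial Loewner coordinate change (it packages this by setting $\phi_t(z)=-i\log h_t(e^{iz})$ and listing the It\^o differentials of $\phi_t(W_t)$, $\phi_t(V_t)$, $\phi'_t(W_t)$, $\phi'_t(V_t)$, $\theta_t$, $\vartheta_t$), and then verifies that the drift of $M_t$ vanishes with the stated exponents, leaving $dM_t=\frac{\sqrt{8/3}}{16}M_t\bigl(10\,\phi''_t(W_t)/\phi'_t(W_t)+3\rho(\phi'_t(W_t)\cot\vartheta_t-\cot\theta_t)\bigr)dB_t$. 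Your phrasing via $\tilde g_t=h_t\circ g_t$ and the time change $s(t)=t+\log|h_t'(0)|$ is exactly the Schramm--Wilson coordinate change that produces those same ODEs, so the two arguments are the same computation in slightly different notation.
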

\begin{proof}
Define $\phi_t(z)=-i\log h_t(e^{iz})$ where $\log$ denotes the branch of the logarithm such that $-i\log h_t(e^{iW_t})=W_t$. Then
\[|h'_t(e^{iW_t})|=\phi'_t(W_t),\quad |h'_t(e^{iV_t})|=\phi'_t(V_t),\quad \vartheta_t=(\phi_t(W_t)-\phi_t(V_t))/2.\]
To simplify the notations, we set $X_1=\phi'_t(W_t), X_2=\phi''_t(W_t), Y_1=\phi'_t(V_t)$. By It\^o formula, we have that
\begin{equation*}
\begin{split}
d\phi_t(W_t)&=\sqrt{8/3}X_1dB_t+\left(-\frac{5}{3}X_2+\frac{\rho}{2}X_1\cot\theta_t\right)dt,\\
d\phi_t(V_t)&=-X_1^2\cot\vartheta_tdt,\\
d\phi'_t(W_t)&=\sqrt{8/3}X_2dB_t+\left(\frac{\rho}{2}X_2\cot\theta_t+\frac{X_2^2}{2X_1}+\frac{X_1-X_1^3}{6}\right)dt, \\
d\phi'_t(V_t)&=\left(-\frac{1}{2}X_1^2Y_1\frac{1}{\sin^2\vartheta_t}+\frac{1}{2}Y_1\frac{1}{\sin^2\theta_t}\right)dt,\\
d\theta_t&=\frac{\sqrt{8/3}}{2}dB_t + \frac{\rho+2}{4}\cot\theta_t dt,\\
d\vartheta_t&=\frac{\sqrt{8/3}}{2}X_1dB_t+\left(-\frac{5}{6}X_2+\frac{1}{2}X_1^2\cot\vartheta_t+\frac{\rho}{4}X_1\cot\theta_t\right)dt.
\end{split}\end{equation*}
And note that
\[|h_t'(0)|^{\alpha}=|\Phi_A'(0)|^{\alpha}\exp\left(\alpha(\int_0^t ds|h'_s(e^{iW_s})|^2-t)\right).\]
So that
\[dM_t=\frac{\sqrt{8/3}}{16}M_t\left(10\frac{X_2}{X_1}+3\rho(X_1\cot\vartheta_t-\cot\theta_t)\right)dB_t.\]
\end{proof}

\begin{figure}[ht!]
\begin{center}
\includegraphics[width=0.4\textwidth]{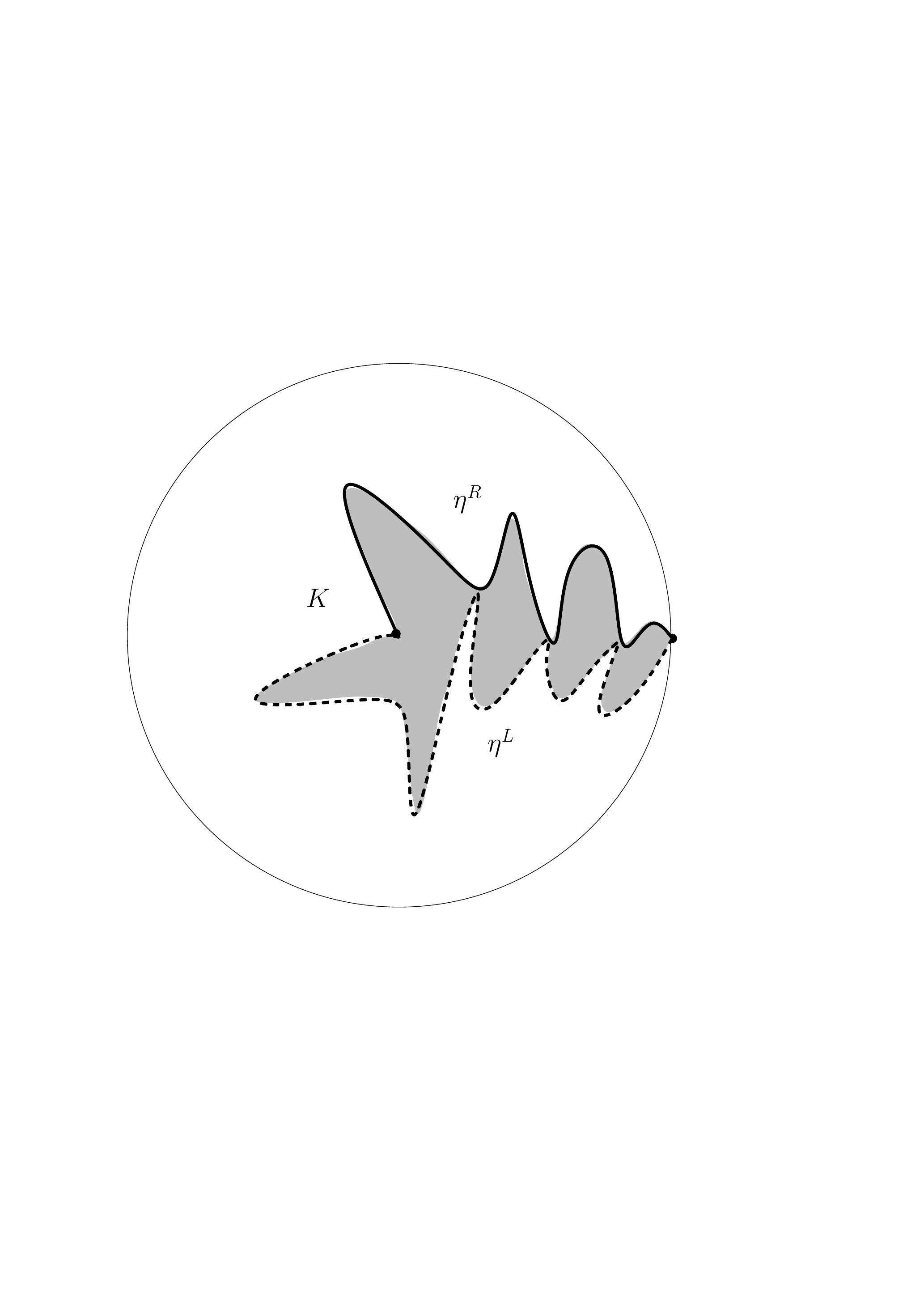}
\end{center}
\caption{\label{fig::construction_maximal_sample}$\eta^R$ is a radial SLE$_{8/3}(\rho)$ in $\U$ from 1 to 0. Conditioned on $\eta^R$, $\eta^L$ is a chordal SLE$_{8/3}^R(\rho-2)$ in $\U\setminus\eta^R([0,\infty])$ from 1 to 0. $K$ is the closure of the union of domains between the two curves.}
\end{figure}

\begin{proposition}\label{prop::radial_sample_construction}
For $\beta>5/8$, let $\rho=\frac{2}{3}(\sqrt{24\beta+1}-1)-2>0$. Let $\eta^R$ be a radial SLE$_{8/3}(\rho)$ in $\overline{\U}$ from 1 to 0 with force point $1^-$. Given $\eta^R$, let $\eta^L$ be an independent chordal SLE$_{8/3}^R(\rho-2)$ in $\U\setminus\eta^R([0,\infty])$ from $1^-$ to 0. Define $K$ as the closure of the union of the domains between $\eta^R$ and $\eta^L$. Then the law of $K$ is $\PP(\xi(\beta),\beta)$ (that therefore exists) and under this probability measure, $0\in\partial K$ almost surely.
\end{proposition}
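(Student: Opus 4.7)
The plan is to exploit the local martingale $M_t$ from the preceding lemma, whose parameters are tuned so that, as I will verify, $M_0=|\Phi_A'(0)|^{\xi(\beta)}\Phi_A'(1)^{\beta}$ for every $A\in\LA^r$. Combining Lemma \ref{lem::unicity_law} with Proposition \ref{prop::radial_restriction_characterization} reduces the whole statement to showing that $M_0=\PP[K\cap A=\emptyset]$; I would obtain this equality by optional stopping at $\tau_A:=\inf\{t:\eta^R\cap A\neq\emptyset\}$ and by identifying the limit of $M_t$ as $t\to\infty$ on $\{\tau_A=\infty\}$ with the conditional probability $\PP[K\cap A=\emptyset\giv\eta^R]$.

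The boundary value is a short direct calculation: at $t=0$ one has $h_0=\Phi_A$, hence $|h_0'(0)|=|\Phi_A'(0)|$ and $|h_0'(e^{iW_0})|=|h_0'(e^{iV_0})|=\Phi_A'(1)$. Since the force point is $V_0=0^-$, both $\theta_0$ and $\vartheta_0$ vanish while $\vartheta_0/\theta_0\to\phi_0'(0)=\Phi_A'(1)$, giving $Z_0=\Phi_A'(1)$. Collecting the four factors and using the arithmetic $5/8+\gamma+3\rho/8=\beta=(\rho+2)(3\rho+10)/32$ together with $\alpha=\xi(\beta)$ produces $M_0=|\Phi_A'(0)|^{\xi(\beta)}\Phi_A'(1)^{\beta}$. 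A standard boundedness estimate (the boundary derivative factors are at most $1$, $Z_t$ is controlled, and $|h_t'(0)|$ stays bounded by a constant depending only on $A$) then promotes $M_t$ to a uniformly integrable martingale up to $\tau_A$.

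Optional stopping at $\tau_A\wedge t$ followed by $t\to\infty$ yields
\[M_0=\E\!\left[M_\infty\mathbf{1}_{\{\tau_A=\infty\}}\right]+\E\!\left[M_{\tau_A}\mathbf{1}_{\{\tau_A<\infty\}}\right].\]
On $\{\tau_A<\infty\}$ the driving point $e^{iW_t}$ approaches $\partial g_t(A)$, forcing $|h_t'(e^{iW_t})|\to 0$ and hence $M_{\tau_A}=0$. On $\{\tau_A=\infty\}$, $\eta^R$ terminates at the origin and the radial Loewner maps $g_t$ accumulate to a conformal normalization $g_\infty$ of the slit domain $D:=\U\setminus\eta^R$. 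The identification of $M_\infty$ with $\PP[K\cap A=\emptyset\giv\eta^R]$ splits into two cases according to which side of $\eta^R$ each component of $A$ sits on: components on the side opposite to $\eta^L$ contribute trivially, since they automatically avoid $K$ and the limiting factors in $M_t$ collapse to $1$; components on the $\eta^L$-side contribute non-trivially. For the latter, the conditional law of $\eta^L$ is chordal SLE$_{8/3}^R(\rho-2)$ in $D$, so by the radial analogue of item 3 of Section \ref{sec::pre_chordal_restriction} the region enclosed between $\eta^R$ and $\eta^L$ is a one-sided chordal restriction sample with exponent $\gamma=\rho(3\rho+4)/32$. After uniformizing $D$ to $\HH$, the three limiting boundary factors $|h_\infty'(e^{iW_\infty})|^{5/8}|h_\infty'(e^{iV_\infty})|^{\gamma}Z_\infty^{3\rho/8}$ then reproduce exactly the chordal restriction formula for this sample, while $|h_\infty'(0)|^{\xi(\beta)}$ performs the conformal-radius conversion between the chordal normalization on $D$ and the radial map $\Phi_A$ on $\U$. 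Taking expectations completes the proof of the restriction identity.

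The main obstacle is precisely this identification step: the conformal bookkeeping required to show that the limiting product of boundary factors matches the one-sided chordal restriction factor for the SLE$_{8/3}^R(\rho-2)$ curve in the slit domain with exponent $\gamma$, and that the interior factor $|h_\infty'(0)|^{\xi(\beta)}$ is precisely the weight needed to translate between the chordal derivative on $D$ and the radial derivative $\Phi_A'(0)$ on $\U$. The remaining claim that $0\in\partial K$ almost surely is then immediate from the construction: both $\eta^R$ and $\eta^L$ terminate at the origin and each lies on $\partial K$, and since the width of $K$ shrinks to the single point $0$ at that common endpoint, $0\in\partial K$.
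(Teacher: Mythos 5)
Your overall strategy coincides with the paper's: use the local martingale $M_t$, verify $M_0=|\Phi_A'(0)|^{\xi(\beta)}\Phi_A'(1)^{\beta}$, invoke optional stopping at $\tau_A$, and identify $\lim_{t\to\infty}M_t$ on $\{\tau_A=\infty\}$ with the conditional probability $\PP[K\cap A=\emptyset\giv\eta^R]$ (while showing $M_t\to 0$ on $\{\tau_A<\infty\}$). The preliminaries — the $M_0$ computation, the boundedness argument, the $\tau_A<\infty$ case, and the observation that $0\in\partial K$ since both curves terminate there — are all consistent with the paper.

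The genuine gap is exactly the one you flag yourself as ``the main obstacle'': the identification of $\lim_t M_t$ on $\{\tau_A=\infty\}$ is never established, only sketched, and the sketch misassigns the roles of the four factors. The paper's proof (citing \cite[Section 5.2]{WernerGirsanov}) shows that along a suitable sequence $t_n\to\infty$ one has
\[
|h'_{t_n}(0)|\to 1,\quad |h'_{t_n}(e^{iW_{t_n}})|\to 1,\quad Z_{t_n}\to 1,\quad |h'_{t_n}(e^{iV_{t_n}})|^{\gamma}\to \PP\bigl[K\cap A=\emptyset\giv\eta^R\bigr].
\]
So three of the four factors degenerate to $1$, and only the force-point factor $|h'_{t_n}(e^{iV_{t_n}})|^{\gamma}$ survives and carries the conditional one-sided chordal restriction probability (with exponent $\gamma=\rho(3\rho+4)/32$, as you correctly compute). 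Your description instead has $|h_\infty'(0)|^{\xi(\beta)}$ performing a nontrivial ``conformal-radius conversion'' and has all three boundary factors $|h_\infty'(e^{iW_\infty})|^{5/8}|h_\infty'(e^{iV_\infty})|^{\gamma}Z_\infty^{3\rho/8}$ jointly reproducing the chordal restriction factor; this is not the right picture, since $g_{t_n}(A)$ is pushed to the boundary away from both $e^{iW_{t_n}}$ and the origin, so those two factors tend to $1$ individually rather than contributing a normalization. Without establishing these four limits precisely (or pointing to a place where it is done), the optional-stopping identity $\E[M_{\tau_A}]=M_0$ does not yet give $\PP[K\cap A=\emptyset]=M_0$, so the proposal as written is incomplete on exactly this step.
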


Hence, this proves that for $\beta \ge 5/8$, $\PP(\alpha, \beta)$ exists if and only if $\alpha \le \xi (\beta)$.

\begin{proof} (See Figure \ref{fig::construction_maximal_sample})
Let $(g_t,t\ge 0)$ be the radial Loewner chain for $\eta^R$. For any $A\in\LA$, let $\tau_A$ be the first time that $\eta^R$ hits $A$. For any $t<\tau_A$, define $h_t$ as the conformal map from $\U\setminus g_t(A)$ onto $\U$ such that $h_t(0)=0, h_t(e^{iW_t})=e^{iW_t}$. Define the local martingale $M$ as in Equation \eqref{eqn::mart_nonintersection}. When $\rho>0$, $M_t$ is positive and bounded by 1. Thus it is a real martingale. Note that \[M_0=|\Phi'_A(0)|^{\xi(\beta)}\Phi'_A(1)^{\beta}.\]

If $\tau_A<\infty$, then there exists a sequence $t_n\to\tau_A$, such that $\lim_n M_{t_n}=0$.

If $\tau_A=\infty$, then there exists a sequence $t_n\to\infty$, such that (see \cite[Section 5.2]{WernerGirsanov})
\[|h'_{t_n}(0)|\to 1,\quad |h'_{t_n}(e^{iW_{t_n}})|\to 1, \quad Z_{t_n}\to 1,\quad |h'_{t_n}(e^{iV_{t_n}})|^{\gamma}\to \PP\bigl[K\cap A=\emptyset\giv\eta^R\bigr].\]
Thus, almost surely,
\[\lim_{t\to\tau_A}M_t=\PP\bigl[K\cap A=\emptyset\giv\eta^R\bigr]1_{\tau_A=\infty}.\]
As a result
\[\PP\bigl[K\cap A=\emptyset\bigr]=\E(M_{\tau_A})=M_0.\]
\end{proof}

\subsection{Why can $\beta$ not be smaller than $5/8$? }
It remains to show that if $\PP(\alpha,\beta)$ exists, then $\beta\ge 5/8$. In the following we assume that $\PP(\alpha,\beta)$ exists. We are going to show how to use this radial measure to construct a chordal restriction measure of exponent $\beta$, which will then imply that $\beta$ cannot be smaller than $5/8$.

Let $X$ be the collection of compact subsets $K$ of $\overline{\U}$ such that $K$ is connected and $\C\setminus K$ is connected. Let $\LA$ be the collection of compact subset $A$ of $\overline{\U}$ such that $A=\overline{\U\cap A}$, $\U\setminus A$ is simply connected. Endow $X$ with the $\sigma$-field generated by the events $\LC(A):=(K\in X: K\cap A=\emptyset)$ for $A\in\LA$. This $\sigma$-field coincides with the $\sigma$-field generated by Hausdorff metric on $X$. In particular, $X$ is compact since $\overline{\U}$ is compact.

Let $K$ be a radial restriction sample of law $\PP(\alpha,\beta)$. For any $\eps>0$, define the probability measure $\mu_\eps$ on $X$ by
\[\mu_\eps(\LC(A))=\PP\bigl[f_\eps(K)\cap A=\emptyset\bigr]\]
where $A\in\LA$ such that $+1\not\in A,-1\not\in A$ and $f_\eps$ is the conformal map from $\U$ onto itself such that $f_\eps(0)=-1+\eps, f_\eps(1)=1$.

Since $X$ is compact, the sequence $(\mu_\eps,\eps>0)$ is tight, thus there exists a subsequence $(\mu_{\eps_k},k\in\N)$ such that $\eps_k\to 0$ and $\mu_{\eps_k}$ converges weakly to some probability measure $\mu$ on $X$. There two observations:
\begin{itemize}
\item For any $A\in\LA$ such that $+1\not\in A,-1\not\in A$,
\begin{align}\label{eqn::convergence}
\mu_{\eps}(\LC(A))=|\Phi'_\eps(-1+\eps)|^{\alpha}\Phi'_\eps(1)^{\beta}\to \Psi'_A(1)^{\beta}\quad \text{as}\quad \eps\to 0
\end{align}
where $\Phi_\eps$ is the conformal map from $\U\setminus A$ onto $\U$ that preserves $-1+\eps$ and $+1$, $\Psi_A$ is the conformal map from $\U\setminus A$ onto $\U$ that preserves $\pm 1$ and $\Psi'_A(-1)=1$.

\item For any $A\in\LA$ such that $+1\not\in A,-1\not\in A$ and $\delta>0$, define $A_o^{\delta}$ as the open $\delta$-neighborhood of $A$ and $A_i^{\delta}=\overline{\U}\setminus (\U\setminus A)_o^{\delta}$. Note that $A_o^{\delta}$ is open, $A_i^{\delta}$ is closed, $\LC(A_o^{\delta})$ is closed and $\LC(A_i^{\delta})$ is open. Thus
    \begin{align*}
    \mu(\LC(A_i^{\delta})\setminus\LC(A_o^{\delta}))\le \lim_k\mu_{\eps_k}(\LC(A_i^{\delta})\setminus\LC(A_o^{\delta})).
    \end{align*}
    From Equation \eqref{eqn::convergence}, we know that there exists $g(\delta)$ goes to zero as $\delta$ goes to zero and is independent of $\eps$ such that
    \[\mu_{\eps_k}(\LC(A_i^{\delta})\setminus\LC(A_o^{\delta}))=\mu_{\eps_k}(\LC(A_i^{\delta}))-\mu_{\eps_k}(\LC(A_o^{\delta}))\le g(\delta).\] Thus we have that
    \begin{align}\label{eqn::continuity}\mu(\LC(A_i^{\delta})\setminus\LC(A_o^{\delta}))\le g(\delta).\end{align}
\end{itemize}

From Equations \eqref{eqn::convergence} and \eqref{eqn::continuity}, we have that
\[\mu(\LC(A))=\Psi'_A(1)^{\beta}\]
for any $A\in\LA$ such that $\pm 1\not\in A$ and $\Psi_A$ is the conformal map from $\U \setminus A$ onto $\U$ that preserves $\pm 1$ and $\Psi'_A(-1)=1$. Thus $\mu$ is the chordal restriction measure of exponent $\beta$, thus $\beta\ge 5/8$.

This concludes the proof of our main theorem.

\subsection{Concluding remarks}

We would just like to note that all the enumerated results on chordal restriction samples that we have briefly recalled in Section \ref{sec::pre_chordal_restriction} do have a radial restriction counterpart:
The dimension of cut-points is the same (and given by $\beta$ only), the boundaries of radial restriction sample $\PP(\xi(\beta),\beta)$ are radial SLE$_{8/3}(\rho)$ processes, the full-plane Brownian intersection exponents describe the law of radial restriction samples conditioned not to intersect etc. We leave the precise statements and detailed proofs to the interested reader.

\section{Acknowledgements}
H.W.'s work is funded by the Fondation CFM JP Aguilar pour la recherche. The author acknowledges also the support and hospitality of FIM at ETH Z\"urich. The author deeply appreciates the reviewer for his/her constructive comments and suggestions on the previous versions of the paper. 












\end{document}